\DeclareMathSymbol{\subsetneqq}{\mathbin}{AMSb}{36}
\newcommand{\R}{\mathbb{R}}
\newcommand{\N}{\mathbb{N}}
\newcommand{\C}{\mathbb{C}}
\newcommand{\beq}{\begin{eqnarray}}
\newcommand{\eeq}{\end{eqnarray}}
\newcommand{\bq}{\begin{equation}}
\newcommand{\eq}{\end{equation}}
\newcommand{\beqn}{\begin{eqnarray*}}
\newcommand{\eeqn}{\end{eqnarray*}}
\newcommand{\bex}{\begin{exo}}
\newcommand{\eex}{\end{exo}}
\newcommand{\ben}{\begin{enumerate}}
\newcommand{\een}{\end{enumerate}}
\newtheorem{th1}{{\bf Theorem}}[section]
\newtheorem{thm}[th1]{{\bf Theorem}}
\newtheorem{lem}[th1]{{\bf Lemma}}
\newtheorem{prop}[th1]{{\bf Proposition}}
\newtheorem{cor}[th1]{{\bf Corollary}}
\newtheorem{rem}[th1]{\bf Remark}
\newtheorem{rems}[th1]{\bf Remarks}
\newtheorem{defi}[th1]{\bf Definition}
\author[T. Saanouni]{Tarek Saanouni}
\address{Department of Mathematics, College of Science and Arts in Uglat Asugour, Qassim University, Buraydah, Kingdom of Saudi Arabia.}
\email{\sl T.saanouni@qu.edu.sa}
\email{\sl Tarek.saanouni@ipeiem.rnu.tn}
\subjclass[2010]{35Q55}
\keywords{Fractional Inhomogeneous Choquard equation, global existence, blow-up.}
\title[Choquard INLS]{Remarks on the fractional inhomogeneous Hartree equation}
\date{\today}
\begin{document}
\begin{abstract}
This paper studies the inhomogeneous fractional Sch\"odinger equation
$$i\dot u-(-\Delta)^s  u=\pm(I_\alpha *|\cdot|^b|u|^p)|x|^b|u|^{p-2}u.$$
In the mass super-critical and energy sub-critical regimes, using a Gagliardo-Nirenberg adapted to the above problem, the standing waves give a sharp threshold of global existence versus finite time blow-up of solutions.
\end{abstract}
\maketitle
\vspace{ 1\baselineskip}
\renewcommand{\theequation}{\thesection.\arabic{equation}}
\section{Introduction}
It is the purpose of this note, to investigate the Cauchy problem for a fractional inhomogeneous Schr\"odinger equation of Choquard type
\begin{equation}
\left\{
\begin{array}{ll}
i\dot u-(-\Delta)^s  u=\epsilon (I_\alpha *|\cdot|^b|u|^p)|x|^b|u|^{p-2}u ;\\
u(0,.)=u_0.
\label{S}
\end{array}
\right.
\end{equation}
The fractional Schr\"odinger equation is a fundamental equation of fractional quantum mechanics \cite{lskn}.
In three space dimensions, if $s=\frac12, b=0$ and $\alpha=p=2$, the above equation arises as an effective description of pseudo-relativistic Boson stars \cite{ll}.\\

Here and hereafter $u$ is a complex valued function of the variable $(t,x)\in\R\times\R^N$, for an integer $N\geq2$. The real number $\epsilon=\pm1$ refers to the defocusing versus focusing regime. The fractional Laplacian operator is
$$(-\Delta)^s\,.\,:=\mathcal F^{-1}(|\xi|^{2s}\mathcal F\,.\,),\quad s\in(0,1).$$
The unbounded inhomogeneous term is $|\cdot|^b$ for a real number $b<0$ and the Riesz-potential is the radial function defined on $\R^N$ as follows 
$$I_\alpha:=\frac{\Gamma(\frac{N-\alpha}2)}{\Gamma(\frac\alpha2)\pi^\frac{N}22^\alpha|\cdot|^{N-\alpha}}:=\frac{\mathcal K}{|\cdot|^{N-\alpha}},\quad  0<\alpha<N.$$
In all this note, one assumes the next restriction on the different parameters of the above problem,
\begin{equation}\label{cnd}
\min\{-b,\alpha,N-\alpha,N+b-s,2s+2b+\alpha,N+\alpha+2b-2s\}>0.
\end{equation}

If $u$ a solution to the above problem, so is the scaled function
$$u_\lambda=\lambda^\frac{2s+2b+\alpha}{2(p-1)}u(\lambda^{2}.,\lambda .),\quad\lambda>0.$$
The critical exponent is the unique real number conserving the homogeneous Sobolev norm 
$$\|u_\lambda(t)\|_{\dot H^{s_c}}=\|u(\lambda^2t)\|_{\dot H^{s_c}},\quad s_c:=\frac N2-\frac{2s+2b+\alpha}{2(p-1)}.$$

In this note, one focus on the mass super-critical $(s_c>0)$ and energy sub-critical $(s_c<1)$ regimes.\\

The inhomogeneous fractional Schr\"odinger problem was considered recently \cite{pz}. Indeed, a sharp dichotomy of global existence versus finite time blow-up of solutions was obtained in the mass super-critical and energy sub-critical regimes. This study follows the ideas of \cite{hr}. It is the aim of this work to extend the previous results to the inhomogeneous fractional Choquard problem \eqref{S}. This note is also a generalization of the previous paper \cite{st2}.\\    
 
It is the contribution of this manuscript, to overcome three difficulties. The first one is the existence of a fractional Schr\"odinger operator, which is partially resolved by considering the radial case. In order to obtain the existence of non-global solutions, one uses a localized variance identity \cite{bhl}. The second one is the presence of an unbounded homogeneous term. The last one is the non-local source term.\\ 

 The rest of this paper is organized as follows. The next section contains some technical tools needed in the sequel. The existence of ground states is proved in section three. Section 4 contains a proof of a sharp Gagliardo-Nirenberg type inequality. The existence of energy sub-critical solutions is established in the fifth section. In section six, a variance type estimate is obtained. A sharp dichotomy of global/non global existence of solutions is given in the last section. \\

We mention that $C$ will denote a constant which may vary from line to line. If $A$ and $B$ are non-negative real numbers, $A\lesssim B$  means that $A\leq CB$. \\
Denote for simplicity the Lebesgue space $L^r:=L^r({\R^N})$ with the usual norm $\|\cdot\|_r:=\|\cdot\|_{L^r}$ and $\|\cdot\|:=\|\cdot\|_2$. Take $H^s:=H^s({\R^N})$ be the usual inhomogeneous Sobolev space endowed with the complete norm 
$$ \|\cdot\|_{H^s} := \Big(\|\cdot\|^2 + \|(-\Delta)^{\frac s2}\cdot\|^2\Big)^\frac12.$$
If $X$ is an abstract space $C_T(X):=C([0,T],X)$ stands for the set of continuous functions valued in $X$ and $X_{rd}$ is the set of radial elements in $X$, moreover for an eventual solution to \eqref{S}, $T^*>0$ denotes it's lifespan. Finally, $x^\pm$ are two real numbers near to $x$ satisfying $x^+>x$ and $x^-<x$.
\section{Background Material}
In this section, one gives the main results and some standard tools needed in the sequel.
\subsection{Preliminary}
The mass-critical and energy-critical exponents are
$$p_*:=1+\frac{\alpha+2s+2b}N,\quad p^*:=\left\{
\begin{array}{ll}
1+\frac{2s+2b+\alpha}{N-2s}\quad\mbox{if}\quad N\geq3;\\
\infty\quad\mbox{if}\quad  N =1,2.
\end{array}
\right.$$
 Here and hereafter define the real numbers
\begin{gather*}
B:=\frac{Np-N-\alpha-2b}s,\quad A:=2p-B;\\
\tilde p:=1+\frac{2b+\alpha}N,\quad \bar p:=1+\frac{2b+\alpha}{N-2s}.
\end{gather*}
For $u\in H^s$, take the quantities
\begin{gather*}
S(u):=\|u\|_{H^s}^2-\frac1p\int_{\R^N}(I_\alpha*|\cdot|^b|u|^p)|x|^b|u|^p\,dx;\\
K(u):=\frac{4s}N\Big(\|(-\Delta)^{\frac s2}u\|^2-\frac B{2p}\int_{\R^N}(I_\alpha*|\cdot|^b|u|^p)|x|^b|u|^p\,dx\Big);\\
H(u):=S(u)-\frac{N}{4s}K(u)=\|u\|^2+\frac{B-2}{2p}\int_{\R^N}(I_\alpha*|\cdot|^b|u|^p)|x|^b|u|^p\,dx.
\end{gather*}
\begin{defi}
A ground state of \eqref{S} is a solution to 
\begin{equation}\label{grnd}
(-\Delta)^s \phi+\phi=(I_\alpha*|\cdot|^b|\phi|^p)|x|^b|\phi|^{p-2}\phi,\quad0\neq\phi\in H^s,
\end{equation}
which minimizes the problem
\begin{equation}\label{min}
m:=\inf_{0\neq u\in H^s}\Big\{S(u) \quad\mbox{s\,. t}\quad K(u)=0\Big\}.
\end{equation}
\end{defi}
If $\phi$ is a ground state to \eqref{S}, the following scale invariant quantities describe the dichotomy of global/non-global existence of solutions \cite{gz}.
$$\mathcal{ME}(u):=\frac{E(u)^{s_c}M(u)^{s-s_c}}{E(\phi)^{s_c}M(\phi)^{s-s_c}}\quad\mbox{and}\quad \mathcal G(u):=\frac{\|(-\Delta)^{\frac s2} u\|^{s_c}\|u\|^{s-s_c}}{\|(-\Delta)^{\frac s2}\phi\|^{s_c}\|\phi\|^{s-s_c}}.$$

Take $\psi\in C_0^\infty(\R^n)$ is a radial function satisfying $\psi''\leq1$ and
$$\psi(x)=\left\{
\begin{array}{ll}
\frac12|x|^2,\quad |x|\leq1 ;\\
0,\quad |x|\geq2.
\end{array}
\right.$$
Then, $\psi_R:=R^2\psi(\frac\cdot R)$ satisfies
$$\psi_R''\leq1,\quad \psi_R'(r)\leq r\quad\mbox{and}\quad \Delta\psi_R\leq N.$$
Denote the localized variance
$$M_\psi[u]:=2\Im\int_{\R^N}u\nabla\psi·\nabla u\,dx = 2\Im\int_{\R^N}u\partial_k\psi\partial_k u\,dx. $$
Finally, define the self-adjoint differential operator
$$\Gamma_\psi :=-i\Big[\nabla((\nabla\psi)\cdot) + \nabla\psi \nabla\cdot\Big].$$
The next sub-section contains the contribution of this manuscript.
\subsection{Main results}
First, one considers the stationary problem associated to \eqref{S} and investigates the existence of ground states.
\begin{thm}\label{t3}
Take $N\geq2$, $s\in(0,1)$, $b,\alpha$ satisfying \eqref{cnd} and $p_* <p<p^*$. Then, there is a ground state solution to \eqref{grnd}-\eqref{min}.
\end{thm}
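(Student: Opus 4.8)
The plan is to realize $\phi$ as a minimizer of the constrained problem \eqref{min} and then to show that the associated Lagrange multiplier vanishes, so that the minimizer solves \eqref{grnd} directly. Write $P(u):=\int_{\R^N}(I_\alpha*|\cdot|^b|u|^p)|x|^b|u|^p\,dx$. First I would record the algebraic identities valid on the constraint set: if $K(u)=0$ then $\|(-\Delta)^{\frac s2}u\|^2=\frac{B}{2p}P(u)$, whence $S(u)=\|u\|^2+\frac{B-2}{B}\|(-\Delta)^{\frac s2}u\|^2$. Since $p>p_*$ forces $B>2$ (and $p<p^*$ forces $A=2p-B>0$), this shows both that $S$ is coercive on $\{K=0\}$, so every minimizing sequence is bounded in $H^s$, and, combined with the homogeneity-respecting Gagliardo--Nirenberg-type bound $P(u)\lesssim\|(-\Delta)^{\frac s2}u\|^{B}\|u\|^{2p-B}$ (a non-sharp version following from Hardy--Littlewood--Sobolev, the fractional Sobolev embedding and the weight restrictions \eqref{cnd}, weaker than the sharp inequality of Section~4), that $m>0$.

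Next I would reformulate the minimization in a weakly lower semicontinuous way. Using the $L^2$-invariant rescaling $u^\tau:=\tau^{N/2}u(\tau\cdot)$ one computes $\|u^\tau\|=\|u\|$, $\|(-\Delta)^{\frac s2}u^\tau\|^2=\tau^{2s}\|(-\Delta)^{\frac s2}u\|^2$ and $P(u^\tau)=\tau^{sB}P(u)$, which give the fiber identity $\frac{d}{d\tau}S(u^\tau)=\frac{N}{2\tau}K(u^\tau)$ and $H(u^\tau)=\|u\|^2+\frac{B-2}{2p}\tau^{sB}P(u)$. Because $B>2$, the map $\tau\mapsto H(u^\tau)$ is strictly increasing whenever $P(u)>0$, while the unique $\tau_0$ with $K(u^{\tau_0})=0$ satisfies $\tau_0\le1$ precisely when $K(u)\le0$. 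A short monotonicity argument then yields $m=\inf\{H(u):\ 0\neq u\in H^s,\ K(u)\le0\}$, the two infima being attained at the same functions; the advantage is that $H(u)=\|u\|^2+\frac{B-2}{2p}P(u)$ is weakly lower semicontinuous along sequences on which $P$ converges, and the relaxed constraint $\{K\le0\}$ is stable under weak limits.

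The heart of the proof is a compactness statement, and this is the step I expect to be the main obstacle. Restricting to the radial class $H^s_{rd}$ and taking a minimizing sequence $(u_n)$ with $K(u_n)=0$ and $S(u_n)\to m$, I would extract a weak limit $u_n\rightharpoonup\bar u$ in $H^s_{rd}$ and prove that $P(u_n)\to P(\bar u)$. The difficulty is that the weight $|x|^b$ is singular at the origin (as $b<0$) and decays at infinity, and the source term is nonlocal; the argument should combine the compact radial embeddings of $H^s_{rd}(\R^N)$ into suitable Lebesgue spaces with the Strauss-type pointwise decay of radial $H^s$ functions and the Hardy--Littlewood--Sobolev inequality, splitting the double integral into regions near $0$, near $\infty$ and the bulk, and using the full parameter range \eqref{cnd} to absorb the weights. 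Granting this, the coercivity lower bound forces $P(u_n)\ge c>0$, hence $P(\bar u)>0$ and $\bar u\neq0$; weak lower semicontinuity gives $K(\bar u)\le0$ and $H(\bar u)\le\liminf H(u_n)=m$, while the reformulation gives $H(\bar u)\ge m$. Equality then forces $K(\bar u)=0$ (otherwise the strict monotonicity above would give $H(\bar u)>m$), so $S(\bar u)=H(\bar u)=m$ and $\bar u$ attains \eqref{min}.

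Finally I would check that $\bar u$ solves \eqref{grnd}. As a constrained minimizer, $\bar u$ satisfies $S'(\bar u)=\lambda K'(\bar u)$ for some $\lambda\in\R$. Pairing with the generator of the scaling $u^\tau$ and using $\langle S'(\bar u),\partial_\tau u^\tau|_{\tau=1}\rangle=\frac{d}{d\tau}S(\bar u^\tau)|_{\tau=1}=\frac N2 K(\bar u)=0$ together with $\langle K'(\bar u),\partial_\tau u^\tau|_{\tau=1}\rangle=\frac{d}{d\tau}K(\bar u^\tau)|_{\tau=1}=\frac{4s^2B}{Np}\big(1-\tfrac B2\big)P(\bar u)\neq0$ (again $B>2$ and $P(\bar u)>0$) yields $\lambda=0$. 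Hence $S'(\bar u)=0$, which is exactly \eqref{grnd}. By Palais' principle of symmetric criticality this radial critical point solves the equation in all of $H^s$, and a symmetric-decreasing rearrangement comparison (fractional P\'olya--Szeg\H{o} for the kinetic part together with a rearrangement inequality for the weighted Choquard term, exploiting that $|x|^b$ is radially decreasing) identifies the radial infimum with $m$, so $\bar u$ is a genuine ground state.
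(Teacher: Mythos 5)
Your proposal is correct and follows essentially the same route as the paper: relax the constraint to $\{K\le 0\}$ via the mass-preserving scaling (the paper's Lemma on $m=\inf\{H(u):K(u)\le 0\}$), pass to a radially symmetric decreasing minimizing sequence by rearrangement, use compactness of the weighted Choquard functional along bounded radial sequences to rule out vanishing and to pass to the limit, and kill the Lagrange multiplier by differentiating $K$ along the scaling fiber, which is nonzero because $B>2$ and $P(\bar u)>0$. The one step you leave as a sketch, namely $P(u_n)\to P(\bar u)$, is exactly what the paper executes via Hardy--Littlewood--Sobolev, a H\"older splitting of the weight $|x|^b$ over $\{|x|<1\}$ and $\{|x|>1\}$ with exponents admissible under \eqref{cnd}, and the compact embedding $H^s_{rd}\hookrightarrow\hookrightarrow L^q$, so your plan coincides with the paper's proof (your quantitative nonvanishing bound is just the contrapositive of the paper's Lemma \ref{K>0}, and the appeal to symmetric criticality is redundant once the rearrangement identifies the radial and full infima).
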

The next inhomogeneous Gagliardo-Nirenberg type inequality adapted to the problem \eqref{S}, will be useful in this note.
\begin{thm}\label{gag}
Let $N\geq2$, $s\in(0,1)$, $b,\alpha$ satisfying \eqref{cnd} and $\tilde p< p< p^*$. Then, 
\begin{enumerate}
\item[1.]
there exists $C(N,p,b,\alpha,s)>0$, such that for any $u\in H^s$,
\begin{equation}\label{ineq}
\int_{\R^N}(I_\alpha*|\cdot|^b|u|^p)|x|^b|u|^p\,dx\leq C(N,p,b,\alpha,s)\|u\|^A\|(-\Delta)^{\frac s2} u\|^B.
\end{equation}
\item[2.]
there exists $\psi\in H^s$ a minimizing of the problem
$$\frac1{C(N,p,b,\alpha,s)}=\inf\Big\{J(u):=\frac{\|u\|^A\|(-\Delta)^{\frac s2} u\|^B}{\int_{\R^N}(I_\alpha*|\cdot|^b|u|^p)|x|^b|u|^p\,dx},\quad0\neq u\in H^s\Big\}$$
 such that ${C(N,p,b,\alpha,s)}=\int_{\R^N}(I_\alpha*|\cdot|^b|\psi|^{p})|x|^b|\psi|^p\,dx$ and
\begin{equation}\label{euler}
B(-\Delta)^s\psi+A\psi-\frac{2p}{C(N,p,b,\alpha,s)}(I_\alpha*|\cdot|^b|\psi|^p)|x|^b|\psi|^{p-2}\psi=0;
\end{equation}
\item[3.]
if $\phi$ is a ground state to \eqref{S}, then
\begin{equation}\label{part3}
C(N,p,b,\alpha,s)=\frac{2p}{A}(\frac AB)^{\frac{B}2}\|\phi\|^{-2(p-1)}.
\end{equation}
\end{enumerate}
\end{thm}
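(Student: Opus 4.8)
The plan is to regard the three assertions as facets of one scale-invariant minimization problem. Throughout the argument write $P(u):=\int_{\R^N}(I_\alpha*|\cdot|^b|u|^p)|x|^b|u|^p\,dx$, so that $J(u)=\|u\|^A\|(-\Delta)^{\frac s2}u\|^B/P(u)$. I begin with part 1. The problem carries two free invariances, the dilation $u\mapsto u(\lambda\cdot)$ and the scalar multiplication $u\mapsto\mu u$; computing how each acts on $P$ and on $\|u\|^A\|(-\Delta)^{\frac s2}u\|^B$ and demanding that $J$ be invariant under both forces $A+B=2p$ together with $sB=Np-N-\alpha-2b$, which is exactly the definition of $A$ and $B$. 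To produce a finite constant I would bound $P$ by a weighted Hardy–Littlewood–Sobolev (Stein–Weiss) inequality, in which the two weights $|x|^b$ and the Riesz kernel combine under the admissibility conditions guaranteed by \eqref{cnd}, reducing $P(u)$ to a power of a single Lebesgue norm $\|u\|_{q}$; a fractional Gagliardo–Nirenberg–Sobolev interpolation then estimates $\|u\|_{q}$ by $\|u\|^{1-\theta}\|(-\Delta)^{\frac s2}u\|^\theta$, and matching homogeneities reproduces $A,B$. This gives $P(u)\le C_0\|u\|^A\|(-\Delta)^{\frac s2}u\|^B$ with $C_0<\infty$, hence $m_J:=\inf_{0\neq u\in H^s}J(u)>0$, and the optimal constant of part 1 is $C(N,p,b,\alpha,s)=1/m_J$.

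Next comes the existence statement of part 2, which I expect to be the main obstacle. I would take a minimizing sequence $(u_n)$ for $J$ and use the two invariances to normalize it so that $\|u_n\|=\|(-\Delta)^{\frac s2}u_n\|=1$; then $(u_n)$ is bounded in $H^s$ and $P(u_n)\to m_J^{-1}$. Passing to a weak limit $u_n\rightharpoonup\psi$, the crux is the weak continuity $P(u_n)\to P(\psi)$. Here the inhomogeneity is decisive: the weight $|x|^b$ breaks translation invariance, so no mass can escape to infinity, while the dilation freedom is already frozen by the normalization. Concretely I would pass to radially symmetric decreasing competitors by Schwarz symmetrization, which leaves $\|u\|$ unchanged, does not increase $\|(-\Delta)^{\frac s2}u\|$ by the fractional Pólya–Szegő inequality, and does not decrease $P$ since both the Riesz kernel and $|x|^b$ (as $b<0$) are radially symmetric decreasing, and then invoke the compact Strauss-type embedding of $H^s_{rd}$ into the relevant weighted Lebesgue space. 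Lower semicontinuity of the $H^s$ norm gives $J(\psi)\le\liminf J(u_n)=m_J$, while $P(\psi)=m_J^{-1}>0$ forces $\psi\neq0$; thus $\psi$ is a minimizer, and with the normalization $J(\psi)=1/P(\psi)=m_J=1/C$, so $P(\psi)=C$.

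The Euler–Lagrange equation then follows from the first variation. Since $J$ is $C^1$ on $H^s\setminus\{0\}$ in this subcritical range, $\frac{d}{d\varepsilon}J(\psi+\varepsilon\eta)\big|_{\varepsilon=0}=0$ for every $\eta\in H^s$. Differentiating each homogeneous factor at the normalized minimizer, the stationarity condition reads $\frac AC\,\mathrm{Re}\langle\psi,\eta\rangle+\frac BC\,\mathrm{Re}\langle(-\Delta)^s\psi,\eta\rangle-\frac{2p}{C^2}\,\mathrm{Re}\int_{\R^N}(I_\alpha*|\cdot|^b|\psi|^p)|x|^b|\psi|^{p-2}\psi\,\bar\eta\,dx=0$, where the last term is the variation of $P(u)^{-1}$. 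Multiplying by $C$ and using $\eta$ together with $i\eta$ to recover the full complex identity yields precisely $B(-\Delta)^s\psi+A\psi-\frac{2p}{C}(I_\alpha*|\cdot|^b|\psi|^p)|x|^b|\psi|^{p-2}\psi=0$, which is \eqref{euler}.

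Finally, part 3 is a short computation resting on two identities for a ground state $\phi$. Testing \eqref{grnd} against $\bar\phi$ gives the Nehari identity $\|(-\Delta)^{\frac s2}\phi\|^2+\|\phi\|^2=P(\phi)$, while the constraint $K(\phi)=0$ reads $\|(-\Delta)^{\frac s2}\phi\|^2=\frac{B}{2p}P(\phi)$; solving these two linear relations gives $P(\phi)=\frac{2p}{A}\|\phi\|^2$ and $\|(-\Delta)^{\frac s2}\phi\|^2=\frac BA\|\phi\|^2$. Because the scaling that turns \eqref{euler} into \eqref{grnd} is the very one identified in part 1, the least-action solution is, up to the invariances, the $J$-minimizer, so the ground state saturates \eqref{ineq}; substituting the two displayed relations into the equality $P(\phi)=C\|\phi\|^A\|(-\Delta)^{\frac s2}\phi\|^B$ and solving for $C$ gives $C=\frac{2p}{A}\big(\frac AB\big)^{B/2}\|\phi\|^{-2(p-1)}$, which is \eqref{part3}. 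The delicate point throughout is the compactness in part 2 and the equivalence of the two variational characterizations used above; once these are secured, the variational identities and the scaling bookkeeping are routine.
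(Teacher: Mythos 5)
Your treatment of parts 1 and 2 is essentially the paper's own proof: the paper likewise symmetrizes a minimizing sequence into $H^s_{rd}$, uses the two-parameter scaling $u^{\lambda,\mu}=\lambda u(\mu\cdot)$ to normalize $\|\psi_n\|=\|(-\Delta)^{\frac s2}\psi_n\|=1$, passes to the limit in the Choquard integral by Hardy--Littlewood--Sobolev plus a H\"older splitting of the weight $|x|^b$ over $\{|x|<1\}$ and $\{|x|>1\}$ combined with the compact embedding $H^s_{rd}\hookrightarrow\hookrightarrow L^q$, and finishes with weak lower semicontinuity and the Euler equation $\partial_\varepsilon J(\psi+\varepsilon\eta)_{|\varepsilon=0}=0$; your explicit Stein--Weiss-plus-interpolation bound for part 1 is a more systematic packaging of estimates the paper runs inside this compactness step. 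The one genuine divergence is part 3: you take an arbitrary ground state $\phi$, derive $\int_{\R^N}(I_\alpha*|\cdot|^b|\phi|^p)|x|^b|\phi|^p\,dx=\frac{2p}{A}\|\phi\|^2$ and $\|(-\Delta)^{\frac s2}\phi\|^2=\frac BA\|\phi\|^2$ from the Nehari identity and $K(\phi)=0$, then substitute into the saturated inequality; the paper instead rescales its normalized minimizer, writing $\psi=\lambda\phi(\mu\cdot)$ with $\mu=(A/B)^{\frac1{2s}}$ so that $\phi$ solves \eqref{grnd}, and reads the constant off $\|\psi\|=1=\lambda\mu^{-N/2}\|\phi\|$. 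Your version has the merit of addressing the statement as written (any ground state), but it rests on the asserted, unproved claim that every ground state attains the sharp constant; the paper has the mirror-image loose end, since it computes $C$ only for the particular solution manufactured from the minimizer. Your ingredients suffice to close this point with one extra comparison: the rescaled minimizer $\phi_0$ solves \eqref{grnd} and satisfies $K(\phi_0)=0$ (a direct check from the scaling relations), so the definition \eqref{min} of a ground state gives $S(\phi)\le S(\phi_0)$; since your two identities show that, for solutions of \eqref{grnd}, $S(\phi)=\frac{2(p-1)}{A}\|\phi\|^2$ and $J(\phi)=\frac{A}{2p}\big(\frac BA\big)^{\frac B2}\|\phi\|^{2(p-1)}$ are both increasing functions of the mass, this yields $J(\phi)\le J(\phi_0)=1/C$, while the reverse inequality is part 1, whence saturation and \eqref{part3} follow for every ground state.
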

Second, one considers the evolution problem \eqref{S} and obtains a local well-posed result in the energy space.
\begin{thm}\label{t0}
Let $N\geq2$, $\frac{N}{2N-1}\leq s<1$, $b,\alpha$ satisfying \eqref{cnd} and $N<4s+\alpha+2b$, $u_0\in H^s$ and $\max\{2,\bar p\}< p< p^*$. Then, there exists $T^* = T^*(\|u_0\|_{H^s})$ such that \eqref{S} admits a unique maximal solution
$$ u\in C_{T^*}(H^s),$$ 
which satisfies the conservation laws
\begin{gather*}
Mass:=M(u(t)) :=\int_{\R^N}|u(t,x)|^2dx = M(u_0);\\
Energy:=E(u(t)) :=\|(-\Delta)^{\frac s2}u(t)\|^2+\frac\epsilon p\int_{\R^N}|x|^b(I_\alpha *|\cdot|^b|u(t)|^p)|u(t)|^p\,dx= E(u_0). 
\end{gather*}
\end{thm}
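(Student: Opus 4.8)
The plan is to reformulate \eqref{S} through the Duhamel integral representation
$$u(t)=e^{-it(-\Delta)^s}u_0-i\epsilon\int_0^te^{-i(t-\tau)(-\Delta)^s}\mathcal N(u)(\tau)\,d\tau,\qquad \mathcal N(u):=(I_\alpha*|\cdot|^b|u|^p)|x|^b|u|^{p-2}u,$$
and to solve it by a contraction mapping argument in a suitable Strichartz space. First I would collect the homogeneous and inhomogeneous (dual) Strichartz estimates for the free fractional group $e^{-it(-\Delta)^s}$; the lower bound $s\geq\frac N{2N-1}$ is precisely what guarantees that these estimates hold without loss of derivatives in the relevant admissible range, in the radial framework considered here. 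I then fix a finite family of admissible pairs and introduce the complete metric space
$$X_T:=\Big\{u\in C_T(H^s)\cap(\text{Strichartz spaces}):\ \|u\|_{X_T}\leq 2C\|u_0\|_{H^s}\Big\},$$
equipped with a weaker Strichartz metric, on which the solution map $\Phi(u):=e^{-it(-\Delta)^s}u_0-i\epsilon\int_0^te^{-i(t-\tau)(-\Delta)^s}\mathcal N(u)\,d\tau$ is to be shown a contraction for $T=T(\|u_0\|_{H^s})$ small enough.

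The heart of the matter is the nonlinear estimate, namely controlling $\mathcal N(u)$ and $(-\Delta)^{\frac s2}\mathcal N(u)$ in the dual Strichartz norm by a power of $\|u\|_{X_T}$ carrying a positive power of $T$. To treat the nonlocal factor I would apply the Hardy--Littlewood--Sobolev inequality to the convolution with $I_\alpha$, reducing matters to Lebesgue bounds on $|x|^b|u|^p$. The singular weight $|x|^b$ (with $b<0$) is the principal difficulty: I would split $\R^N$ into the unit ball and its complement, estimating $|x|^b$ in a Lebesgue space near the origin (where the conditions $-b>0$ and $N+b-s>0$ ensure local integrability against the fractional Sobolev bound) and treating it as bounded away from the origin. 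The constraints \eqref{cnd}, together with $N<4s+\alpha+2b$ and $\max\{2,\bar p\}<p<p^*$, are exactly what make the resulting H\"older exponents admissible and produce the gain in $T$. For the derivative norm I would combine a fractional product (Kato--Ponce) rule and the fractional chain rule applied to $|u|^{p-2}u$ with the same weighted H\"older splitting.

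Granting these bounds, a standard fixed-point argument yields a unique solution $u\in X_T$; uniqueness in $C_T(H^s)$ and the blow-up alternative defining the maximal lifespan $T^*=T^*(\|u_0\|_{H^s})$ then follow by the usual continuation and stability arguments. I expect the weighted nonlinear estimate---reconciling the singular factor $|x|^b$, the nonlocal Riesz term, and the fractional derivative of order $s$---to be the main obstacle, since the admissibility of all the H\"older and Strichartz exponents must be verified against the full list \eqref{cnd} simultaneously.

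Finally, the conservation laws are obtained by the standard regularization procedure. Multiplying \eqref{S} by $\bar u$, integrating over $\R^N$ and taking imaginary parts gives $\frac d{dt}M(u)=0$, while pairing \eqref{S} with $\partial_t\bar u$ and taking real parts gives $\frac d{dt}E(u)=0$. Since $u$ only has finite energy, I would first establish both identities for smooth approximating solutions and then pass to the limit, using the continuity of $u$ in $H^s$ and the continuity of the energy functional furnished by the Gagliardo--Nirenberg inequality \eqref{ineq}.
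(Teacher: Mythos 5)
Your proposal is correct and follows essentially the same route as the paper: a contraction argument on a Strichartz ball $B_T(R)$ equipped with the weaker (derivative-free) metric, using the radial Strichartz estimates of Guo--Wang (whence $s\geq\frac N{2N-1}$), the Hardy--Littlewood--Sobolev/Choquard estimate for the nonlocal factor, a H\"older splitting of the weight $|x|^b$ inside and outside the unit ball, and the fractional chain/product rules for the $(-\Delta)^{\frac s2}$-norm, with the hypotheses \eqref{cnd}, $N<4s+\alpha+2b$ and $\max\{2,\bar p\}<p<p^*$ entering exactly where you place them. The only addition is your sketch of the conservation laws by regularization, which the paper states but does not prove.
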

\begin{rems}
\begin{enumerate}
\item[1.] $u\in L^q_{loc}((0,T^*),W^{1,r})$ for any admissible pair $(q,r)$;
\item[2.]
$T^*=\infty$ in the defocusing case or mass-sub-critical case;
\item[3.]
the previous theorem seems to hold for $\tilde p< p \leq p^*$. 
\end{enumerate}
\end{rems}
The next localized variance identity, will be needed to obtain the existence of non-global solutions to \eqref{S}.
\begin{prop}\label{viril}
Let $N\geq2$, $s\in(\frac12, 1)$, $b,\alpha$ satisfying \eqref{cnd} and $\tilde p<p<p^*$. Take $u\in C_T(H^s_{rd})$ be a local solution of \eqref{S}. Then, for any $R>0$ and $\varepsilon>0$ near to zero, holds on $[0,T)$, 
\begin{eqnarray*}
\frac{d}{dt}M_{\psi_R}[u]
&\leq&2sBE-2s(B-2)\|(-\Delta)^{\frac s2}u\|^2+\frac C{R^{2s}}\\
&+&\frac C{R^{(N-1-\varepsilon-2b)(p-1-\frac\alpha N)}}\|(-\Delta)^\frac s2 u\|^{\frac{1+\varepsilon}{s}(p-1-\frac\alpha N)}.
\end{eqnarray*}
\end{prop}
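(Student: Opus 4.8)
The plan is to differentiate the localized variance $M_{\psi_R}[u]$ along the flow of \eqref{S} and to split the result into the fractional kinetic part and the Choquard part. Write $G:=\|(-\Delta)^{\frac s2}u\|^2$ and $P(u):=\int_{\R^N}(I_\alpha*|\cdot|^b|u|^p)|x|^b|u|^p\,dx$. Since $M_{\psi_R}[u]=\langle u,\Gamma_{\psi_R}u\rangle$ with $\Gamma_{\psi_R}$ self-adjoint, differentiating in time and inserting the equation $i\dot u=(-\Delta)^su+\epsilon(I_\alpha*|\cdot|^b|u|^p)|x|^b|u|^{p-2}u$ gives
\[
\frac{d}{dt}M_{\psi_R}[u]=\langle u,\,i[(-\Delta)^s,\Gamma_{\psi_R}]u\rangle+\mathcal N_R[u],
\]
where the first term is the quadratic fractional contribution and $\mathcal N_R[u]$ is the nonlinear term produced by the Choquard nonlinearity. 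I would estimate the two pieces separately.

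For the kinetic term I would follow the radial fractional virial machinery of \cite{bhl}. Representing $(-\Delta)^s$ through the Balakrishnan formula $(-\Delta)^s=\frac{\sin(\pi s)}{\pi}\int_0^\infty m^{s-1}\frac{-\Delta}{-\Delta+m}\,dm$ and commuting with $\Gamma_{\psi_R}$, the region $|x|\le R$, where $\psi_R(x)=\frac12|x|^2$ and $\Gamma_{\psi_R}$ reduces to the generator of dilations, produces the leading term $4sG$ because $(-\Delta)^s$ is homogeneous of degree $2s$. The remaining contributions involve the higher derivatives $\psi_R^{(k)}$ with $k\ge3$, which satisfy $\|\psi_R^{(k)}\|_\infty\lesssim R^{2-k}$; after integrating the associated expressions in $m\in(0,\infty)$ and using the conservation of mass to absorb the $L^2$-factor, they contribute at most $CR^{-2s}$. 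Hence
\[
\langle u,\,i[(-\Delta)^s,\Gamma_{\psi_R}]u\rangle\le 4sG+\frac{C}{R^{2s}}.
\]

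For the nonlinear term I would integrate by parts against the weight $\nabla\psi_R$. On $|x|\le R$ the weight equals $x$, and a Pohozaev-type computation exploiting the homogeneities of the Riesz kernel $I_\alpha$ and of the factor $|x|^b$ reproduces the scaling exponent $B=\frac{Np-N-\alpha-2b}{s}$ and yields the contribution $\frac{2sB\epsilon}{p}P(u)$. The error originates from the outer region $|x|\ge R$ together with the discrepancy $\nabla\psi_R-x$; here I would invoke the radial Sobolev (Strauss) inequality, available precisely because $s>\frac12$, in the form
\[
\||x|^{\frac{N-1-\varepsilon}{2}}u\|_{L^\infty(|x|\ge R)}\lesssim\|u\|^{1-\frac{1+\varepsilon}{2s}}\|(-\Delta)^{\frac s2}u\|^{\frac{1+\varepsilon}{2s}},
\]
and apply it to the appropriate factors of the localized Choquard double integral. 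Combining the Hardy--Littlewood--Sobolev inequality for the Riesz kernel, the conserved mass, and careful tracking of the two weights $|x|^b$ and of the exponent $p-1-\frac\alpha N$ that counts the factors estimated in $L^\infty$, this bounds the error by $\frac{C}{R^{(N-1-\varepsilon-2b)(p-1-\frac\alpha N)}}\|(-\Delta)^{\frac s2}u\|^{\frac{1+\varepsilon}{s}(p-1-\frac\alpha N)}$, the $\varepsilon$-loss and the factor $\frac1s$ being the usual artifacts of the fractional radial embedding.

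It remains to assemble the pieces. Adding the two bounds gives $\frac{d}{dt}M_{\psi_R}[u]\le 4sG+\frac{2sB\epsilon}{p}P(u)+\frac{C}{R^{2s}}+(\text{tail})$, and since by energy conservation $E=G+\frac\epsilon pP(u)$ one has $4sG+\frac{2sB\epsilon}{p}P(u)=2sB\big(G+\frac\epsilon pP(u)\big)-2s(B-2)G=2sBE-2s(B-2)G$, which is the asserted inequality. The main obstacle is the kinetic step: making the fractional commutator computation for the localized radial weight rigorous and verifying that every localization error genuinely scales like $R^{-2s}$, which is exactly where $s>\frac12$ and the apparatus of \cite{bhl} are needed. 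The nonlocal tail estimate is the secondary difficulty, since the presence of both the singular weight $|x|^b$ and the Riesz kernel makes the bookkeeping in the radial estimate delicate.
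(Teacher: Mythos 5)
Your proposal follows essentially the same route as the paper: the same commutator decomposition $\langle u, i[(-\Delta)^s,\Gamma_{\psi_R}]u\rangle + \mathcal N_R[u]$, the kinetic bound $4s\|(-\Delta)^{\frac s2}u\|^2 + CR^{-2s}$ imported from \cite{bhl}, a Pohozaev-type integration by parts (with symmetrization of the Riesz kernel) producing the main term $-\frac{2sB}{p}\int(I_\alpha*|\cdot|^b|u|^p)|x|^b|u|^p\,dx$ in the focusing case, and the tail on $\{|x|>R\}$ controlled by Hardy--Littlewood--Sobolev, the radial Strauss estimate \eqref{frcs} with $\mu=\frac{1+\varepsilon}2$, interpolation, and mass conservation, yielding exactly the exponents $(N-1-\varepsilon-2b)(p-1-\frac\alpha N)$ and $\frac{1+\varepsilon}{s}(p-1-\frac\alpha N)$. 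The concluding algebra converting $4s\|(-\Delta)^{\frac s2}u\|^2+\frac{2sB\epsilon}{p}P(u)$ into $2sBE-2s(B-2)\|(-\Delta)^{\frac s2}u\|^2$ is also the paper's, so the proposal is correct and matches the paper's proof in all essential steps.
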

The standing waves give a threshold of global existence versus finite time blow-up of solutions.
\begin{thm}\label{Blow-up}
Let $ N\geq2$, $\frac{N}{2N-1}\leq s<1$, $b,\alpha$ satisfying \eqref{cnd}, $u_0\in H^s$, $0< s_c<s$, $\phi$ be a ground state solution to \eqref{grnd} satisfying
\begin{equation} \label{ss}
\mathcal{ME}(u_0)<1.
\end{equation}
Take a maximal solution ${u}\in C_{T^*}(H^s)$ of \eqref{S}. Thus,
\begin{enumerate}
\item[1.]
if 
\begin{equation} \label{ss1}
\mathcal{G}(u_0)<1,
\end{equation}
 then, ${u}$ is global;
\item[2.]
if $p<1+\frac\alpha N+2s$ and
\begin{equation} \label{ss2}
\mathcal{G}(u_0)>1,
\end{equation}
 then, $u$ blows-up in finite time.
\end{enumerate}
\end{thm}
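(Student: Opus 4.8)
The plan is to reduce both alternatives to the behaviour of a single scalar ``bump'' function built from the sharp Gagliardo--Nirenberg inequality of Theorem \ref{gag}, and then to feed the resulting coercivity into the localized virial estimate of Proposition \ref{viril}. Throughout I work in the focusing case $\epsilon=-1$ (in the defocusing case the energy is coercive and Theorem \ref{t0} already gives global solutions). First I would recast the two scale-invariant quantities in product form: setting $\kappa:=\frac{s-s_c}{s_c}>0$, $W(u):=\|(-\Delta)^{\frac s2}u\|^2M(u)^{\kappa}$ and $\mathcal E(u):=E(u)M(u)^{\kappa}$, one checks $\mathcal G(u)^{2/s_c}=W(u)/W(\phi)$ and $\mathcal{ME}(u)^{1/s_c}=\mathcal E(u)/\mathcal E(\phi)$, so that \eqref{ss}, \eqref{ss1}, \eqref{ss2} read $\mathcal E(u_0)<\mathcal E(\phi)$, $W(u_0)<W(\phi)$ and $W(u_0)>W(\phi)$ respectively.

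Next I would exploit the homogeneity of \eqref{ineq}. Since $s_c>0$ forces $B>2$ (with equality $B=2$ exactly at $p=p_*$), the scaling identity $\frac A2=(\frac B2-1)\kappa$ turns the Gagliardo--Nirenberg bound into $\mathcal E(u)\ge \Phi\big(W(u)\big)$, where $\Phi(w):=w-\frac{C_{\mathrm{GN}}}{p}\,w^{B/2}$ is a bump with a single interior maximum. Using $K(\phi)=0$, which yields $E(\phi)=\frac{B-2}{B}\|(-\Delta)^{\frac s2}\phi\|^2$, together with the value of the sharp constant in \eqref{part3}, I would verify that this maximum sits exactly at $w=W(\phi)$ with $\Phi(W(\phi))=\frac{B-2}{B}W(\phi)=\mathcal E(\phi)$. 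Because $\mathcal E$ and $M$ are conserved by Theorem \ref{t0}, the inequality $\Phi(W(u(t)))\le\mathcal E(u(t))=\mathcal E(u_0)<\mathcal E(\phi)=\max\Phi$ shows, by continuity of $t\mapsto W(u(t))$, that $W(u(t))$ can never reach $W(\phi)$; hence it stays on the side fixed by the datum. In case \eqref{ss1} this traps $W(u(t))$ below a level $w_-<W(\phi)$, so $\|(-\Delta)^{\frac s2}u(t)\|$ and thus $\|u(t)\|_{H^s}$ are bounded uniformly, and the blow-up alternative of Theorem \ref{t0} gives $T^*=\infty$, proving part 1.

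For part 2, under \eqref{ss2} the same barrier traps $W(u(t))\ge w_+>W(\phi)$, giving a uniform lower bound $\|(-\Delta)^{\frac s2}u(t)\|^2\ge\rho>0$. Eliminating the nonlinearity through energy conservation yields the clean identity $NK(u)=2sBE_0-2s(B-2)\|(-\Delta)^{\frac s2}u\|^2$, and combining the strict separation $W(u(t))>W(\phi)$ with $\mathcal E(u_0)<\mathcal E(\phi)$ produces a uniform gap $\|(-\Delta)^{\frac s2}u(t)\|^2\ge(1+\delta)\frac{B}{B-2}E_0$, whence $K(u(t))\le -c\,\|(-\Delta)^{\frac s2}u(t)\|^2$ for some $c>0$. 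Plugging this into Proposition \ref{viril} gives
\[
\frac{d}{dt}M_{\psi_R}[u]\le -c\,\|(-\Delta)^{\frac s2}u\|^2+\frac{C}{R^{2s}}+\frac{C}{R^{\theta}}\|(-\Delta)^{\frac s2}u\|^{\gamma},\qquad \gamma=\tfrac{1+\varepsilon}{s}\big(p-1-\tfrac\alpha N\big),
\]
with $\theta$ the exponent of Proposition \ref{viril}. Here the restriction $p<1+\frac\alpha N+2s$ is precisely what makes $\gamma<2$ for $\varepsilon$ small, so Young's inequality absorbs the last term into $\frac c2\|(-\Delta)^{\frac s2}u\|^2$ at the cost of a constant that, together with $CR^{-2s}$, is made smaller than $\frac c4\rho$ by taking $R$ large. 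This leaves $\frac{d}{dt}M_{\psi_R}[u]\le -\frac c4\rho<0$. A standard convexity argument then finishes: once $M_{\psi_R}[u]$ turns negative, the a priori bound $|M_{\psi_R}[u]|\lesssim R\,\|u_0\|\,\|(-\Delta)^{\frac s2}u\|$ available in the radial class upgrades the estimate to $\frac{d}{dt}M_{\psi_R}[u]\le -c'M_{\psi_R}[u]^2$, which drives $M_{\psi_R}[u]\to-\infty$, hence $\|(-\Delta)^{\frac s2}u\|\to\infty$, in finite time, contradicting $T^*=\infty$.

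I expect the main obstacle to be the virial machinery underlying Proposition \ref{viril} and the a priori bound on $M_{\psi_R}[u]$ in the fractional radial setting (this is where $s>\frac12$, radiality, and the localizer $\psi_R$ enter), but since that estimate is granted, the remaining delicate point is the variational bookkeeping: checking the scaling identity $\frac A2=(\frac B2-1)\kappa$ and that \eqref{part3} together with $K(\phi)=0$ place $\max\Phi$ exactly at $W(\phi)$, and then promoting the soft separation $W(u(t))\neq W(\phi)$ into the quantitative gap needed for uniform coercivity.
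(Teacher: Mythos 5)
Your variational scheme is, modulo the mass normalization $W(u)=\|(-\Delta)^{\frac s2}u\|^2M(u)^{\kappa}$, $\mathcal E(u)=E(u)M(u)^{\kappa}$, the same as the paper's Lemma \ref{stbl}: the paper works instead with $X(t)=\|(-\Delta)^{\frac s2}u(t)\|^2$ and $f(x)=x-\mathcal Dx^{B/2}$, $\mathcal D=\frac{C_{N,p,b,\alpha,s}}{p}\|u_0\|^A$, and your scaling identity $\frac A2=\kappa(\frac B2-1)$, the placement of the maximum of $\Phi$ at $W(\phi)$ via \eqref{part3} and the Pohozaev identities, and the continuity-trapping argument reproduce exactly its content. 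Your part 1 is the paper's part 1. In part 2, your gap $\|(-\Delta)^{\frac s2}u(t)\|^2\ge(1+\delta)\frac{B}{B-2}E_0$ and the coercivity $K(u(t))\le-c\|(-\Delta)^{\frac s2}u(t)\|^2$ are the paper's choice of $\eta$ with $(1-\eta)(B-2)\|(-\Delta)^{\frac s2}u\|^2>BE(u_0)$, and the absorption of the error terms of Proposition \ref{viril} using $\gamma<2$ (this is where $p<1+\frac\alpha N+2s$ enters) together with the uniform lower bound $\rho$ is how the paper reaches $\frac{d}{dt}M_{\psi_R}[u]\le-s\eta(B-2)\|(-\Delta)^{\frac s2}u\|^2$. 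All of this bookkeeping checks out.

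The genuine gap is in your closing step. The bound $|M_{\psi_R}[u]|\lesssim R\|u_0\|\|(-\Delta)^{\frac s2}u\|$ is false for $s<1$: $M_{\psi_R}[u]=2\Im\int\bar u\,\nabla\psi_R\cdot\nabla u\,dx$ involves the full gradient $\nabla u$, which is not controlled by the $H^s$ norm (radiality does not help here; the obstruction is regularity, not symmetry), so the differential inequality $\frac{d}{dt}M_{\psi_R}[u]\le-c'M_{\psi_R}[u]^2$ you build on it is not available. What is true, by the commutator estimate of Boulenger--Himmelsbach--Lenzmann \cite{bhl}, is $|M_{\psi_R}[u]|\lesssim_R\|u\|_{\dot H^{1/2}}^2+\|u\|\,\|u\|_{\dot H^{1/2}}$, and interpolating $\|u\|_{\dot H^{1/2}}\le\|u\|^{1-\frac1{2s}}\|(-\Delta)^{\frac s2}u\|^{\frac1{2s}}$ (this is where $s>\frac12$ and mass conservation enter) gives $|M_{\psi_R}[u]|\lesssim 1+\|(-\Delta)^{\frac s2}u\|^{1/s}$; the resulting ODE is $y'\le-c|y|^{2s}$, not $y'\le-cy^2$. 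Since $2s>1$, this still drives $|M_{\psi_R}[u]|$ to infinity in finite time, so your argument is repairable with the correct exponent. Alternatively, the inequality you have already established, $\frac{d}{dt}M_{\psi_R}[u]\le-\frac c4\|(-\Delta)^{\frac s2}u\|^2$, integrates (using the lower bound $\rho$) precisely to the hypothesis of the paper's Lemma \ref{ode}, which is the black box the paper itself invokes from \cite{st} at this exact point; citing it would have avoided the incorrect virial bound altogether.
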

\begin{rem}
\begin{enumerate}
\item[1.]
The scattering of global solutions to \eqref{S} is considered in a paper in progress;
\item[2.]
the condition $p<1+\frac\alpha N+2s$ is due to the localized variance identity.
\end{enumerate}
\end{rem}
The next sub-section contains some standard tools.
\subsection{Useful estimates}
First, recall a Hardy-Littlewood-Sobolev inequality \cite{el}.
\begin{lem}\label{hls}
Let $N\geq1$, $0 <\lambda < N$ and $1<s,r<\infty$ be such that $\frac1r +\frac1s +\frac\lambda N = 2$. Then,
$$\int_{\R^N\times\R^N} \frac{f(x)g(y)}{|x-y|^\lambda}\,dx\,dy\leq C(N,s,\lambda)\|f\|_{r}\|g\|_{s},\quad\forall f\in L^r,\,\forall g\in L^s.$$
\end{lem}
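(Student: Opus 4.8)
The plan is to recognize the left-hand side as a pairing with the Riesz potential and to reduce the claim to a single mapping property of fractional integration. Writing $T_\lambda g(x):=\int_{\R^N}|x-y|^{-\lambda}g(y)\,dy$, the double integral equals $\int_{\R^N}f(x)\,T_\lambda g(x)\,dx$, so by H\"older's inequality with exponents $r$ and $r'=\frac{r}{r-1}$ it is bounded by $\|f\|_r\,\|T_\lambda g\|_{r'}$. Hence everything reduces to the estimate $\|T_\lambda g\|_{r'}\lesssim\|g\|_s$. Note that the hypothesis $\frac1r+\frac1s+\frac\lambda N=2$ is exactly the scaling relation $\frac1{r'}=\frac1s-\frac{N-\lambda}N$ dictated by the homogeneity of the kernel, so the target exponent $r'$ is forced; this is the only place where the constraint enters.

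To prove the mapping property I would use the Hedberg-type pointwise bound obtained by splitting the kernel at a free radius $\rho>0$. For the near field one estimates, by decomposing $\{|x-y|<\rho\}$ into dyadic annuli and using the Hardy-Littlewood maximal function $Mg$,
$$\int_{|x-y|<\rho}\frac{|g(y)|}{|x-y|^\lambda}\,dy\lesssim\rho^{N-\lambda}\,Mg(x),$$
which is legitimate because $\lambda<N$. For the far field, H\"older's inequality gives
$$\int_{|x-y|>\rho}\frac{|g(y)|}{|x-y|^\lambda}\,dy\le\|g\|_s\Big(\int_{|x-y|>\rho}|x-y|^{-\lambda s'}\,dy\Big)^{1/s'}\lesssim\rho^{\frac N{s'}-\lambda}\,\|g\|_s,$$
and the tail integral converges precisely because $\lambda s'>N$; this in turn follows from the constraint, since $\frac\lambda N+\frac1s=2-\frac1r>1$ as $r>1$.

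Since $N-\lambda>0>\frac N{s'}-\lambda$, optimizing the sum of the two bounds over $\rho$ yields the interpolation inequality
$$T_\lambda g(x)\lesssim\big(Mg(x)\big)^{s/r'}\,\|g\|_s^{1-s/r'},$$
where the exponent $s/r'$ is exactly the one produced by the balancing and lies in $(0,1)$ thanks to $\lambda s'>N$ and $\lambda<N$. Raising to the power $r'$, integrating in $x$, and using $\frac{s}{r'}\cdot r'=s$ leaves $\int_{\R^N}(Mg)^s\,dx=\|Mg\|_s^s$, which is controlled by $\|g\|_s^s$ via the Hardy-Littlewood maximal theorem, valid because $s>1$. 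Collecting powers gives $\|T_\lambda g\|_{r'}\lesssim\|g\|_s$ and hence the lemma. The main obstacle is the boundedness of fractional integration itself, namely the two kernel estimates together with the correct optimization in $\rho$; once the exponent bookkeeping is matched against the constraint (in particular the consequences $\lambda s'>N$ and $s>1$ of the hypotheses), the remaining steps are routine. The sharp constant, which is not needed here, would instead require the rearrangement approach of Lieb.
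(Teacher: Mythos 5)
Your proof is correct, and the exponent bookkeeping checks out at every point: the constraint $\frac1r+\frac1s+\frac\lambda N=2$ does give $\frac1{r'}=\frac1s-\frac{N-\lambda}N$, the tail condition $\lambda s'>N$ is equivalent to $\frac\lambda N+\frac1s>1$, i.e.\ to $r>1$, the balancing over $\rho$ produces precisely the exponent $\frac s{r'}=1-\frac{s(N-\lambda)}N\in(0,1)$, and the final application of the Hardy--Littlewood maximal theorem is legitimate since $s>1$. Note, however, that the paper does not prove this lemma at all: it is quoted as a classical result with a citation to Lieb's \emph{Analysis} \cite{el}, where it is established by a rearrangement argument (yielding, in the diagonal case, the sharp constant and the existence of optimizers). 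So your route is genuinely different in the sense that you have supplied a self-contained real-variable proof --- duality plus Hedberg's pointwise interpolation inequality between the Riesz potential and the maximal function --- which gives a non-sharp constant $C(N,s,\lambda)$; that is entirely adequate here, since everywhere the paper uses Lemma \ref{hls} (Corollary \ref{cor}, the Gagliardo--Nirenberg inequality \eqref{ineq}, the variance estimate) only an unspecified constant is needed, while the rearrangement approach would only become necessary if one wanted sharp constants or optimizers for the HLS inequality itself. Two cosmetic points: you should reduce at the outset to $f,g\ge0$ (or insert absolute values) so that Tonelli justifies rewriting the double integral as $\int f\,T_\lambda g\,dx$, and you may wish to remark that $r>1$ guarantees $r'<\infty$ so that raising the pointwise bound to the power $r'$ and integrating is meaningful.
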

The next consequence \cite{st}, is adapted to the Choquard problem.
\begin{cor}\label{cor}\label{lhs2}
Let $N\geq1$, $0 <\lambda < N$ and $1<s,r,q<\infty$ be such that $\frac1q+\frac1r+\frac1s=1+\frac\alpha N$. Then,
$$\|(I_\alpha*f)g\|_{r'}\leq C(N,s,\alpha)\|f\|_{s}\|g\|_{q},\quad\forall f\in L^s, \,\forall g\in L^q.$$
\end{cor}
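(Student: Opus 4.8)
The plan is to argue by duality and reduce the mixed norm estimate to the Hardy--Littlewood--Sobolev inequality of Lemma~\ref{hls} applied with the exponent $\lambda=N-\alpha$. Recall that by definition $I_\alpha=\mathcal K/|\cdot|^{N-\alpha}$, and the standing assumption $0<\alpha<N$ guarantees $0<N-\alpha<N$, so this choice of $\lambda$ is admissible in Lemma~\ref{hls}. Since $1<r<\infty$, its conjugate satisfies $1<r'<\infty$, and I would compute the $L^{r'}$ norm by testing against $L^r$ functions.

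First I would write, using the duality $(L^{r'})^*=L^r$,
$$\|(I_\alpha*f)g\|_{r'}=\sup\Big\{\Big|\int_{\R^N}(I_\alpha*f)(x)\,g(x)\,h(x)\,dx\Big|\ :\ h\in L^r,\ \|h\|_r\leq1\Big\}.$$
Expanding the convolution and setting $G:=gh$, each integral appearing in the supremum becomes
$$\int_{\R^N}(I_\alpha*f)(x)\,G(x)\,dx=\mathcal K\int_{\R^N\times\R^N}\frac{f(y)\,G(x)}{|x-y|^{N-\alpha}}\,dx\,dy.$$
Next I would apply Lemma~\ref{hls} with $\lambda=N-\alpha$, the first factor being $f\in L^s$ and the second being $G\in L^{p}$ where the intermediate exponent $p$ is defined by $\frac1p:=\frac1q+\frac1r$. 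The hypothesis of Lemma~\ref{hls} then reads $\frac1s+\frac1p+\frac{N-\alpha}N=2$, i.e. $\frac1s+\frac1p=1+\frac\alpha N$; substituting $\frac1p=\frac1q+\frac1r$ this is precisely the standing relation $\frac1s+\frac1q+\frac1r=1+\frac\alpha N$, so the constraint is exactly matched. This yields
$$\mathcal K\int_{\R^N\times\R^N}\frac{f(y)\,G(x)}{|x-y|^{N-\alpha}}\,dx\,dy\leq C(N,s,\alpha)\|f\|_s\,\|G\|_{p}.$$
Finally, since $\frac1p=\frac1q+\frac1r$, Hölder's inequality gives $\|G\|_p=\|gh\|_p\leq\|g\|_q\|h\|_r\leq\|g\|_q$ using $\|h\|_r\leq1$. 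Combining these two bounds and taking the supremum over all admissible $h$ delivers the claimed inequality.

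The argument is essentially a bookkeeping of exponents once the duality reduction is in place; the only point that I expect to require genuine care is verifying that the intermediate exponent $p$ defined by $\frac1p=\frac1q+\frac1r$ lies in the open range $(1,\infty)$ demanded by Lemma~\ref{hls}. Indeed $\frac1p>0$ is automatic, while $p>1$ amounts to $\frac1q+\frac1r<1$, equivalently $\frac1s>\frac\alpha N$, which is the condition one must confirm from the admissible parameter regime. This check, together with $1<s<\infty$, is the main (and only mild) obstacle; the remainder is a direct chaining of Lemma~\ref{hls} and Hölder's inequality.
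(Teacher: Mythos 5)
Your argument is correct and is essentially the route the paper intends: the corollary is stated without proof as a consequence of Lemma \ref{hls} (citing \cite{st}), and your duality-plus-H\"older reduction to the bilinear Hardy--Littlewood--Sobolev inequality with $\lambda=N-\alpha$ is the standard derivation of exactly that kind. The caveat you rightly flag --- that the intermediate exponent satisfies $p>1$, i.e. $\frac1s>\frac\alpha N$ --- is genuinely needed, but it is an implicit hypothesis omitted from the statement itself rather than a defect of your proof (without it $I_\alpha*f$ need not even be defined for general $f\in L^s$), and it holds at every exponent choice where the paper actually invokes the corollary, e.g. $s=\frac{2N}{\alpha+N}$, for which $\frac1s=\frac{\alpha+N}{2N}>\frac\alpha N$ since $\alpha<N$.
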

Sobolev injections \cite{co} give a meaning to several computations done in this note.
\begin{lem}\label{sblv}
Let $N\geq2$, $p\in(1,\infty)$ and $s\in(0,1)$, then 
\begin{enumerate}
\item[1.]
$H^s\hookrightarrow L^q$ for any $q\in[2,\frac{2N}{N-2s}]$;
\item[2.]
the following injection $H^s_{rd} \hookrightarrow\hookrightarrow L^q$ is compact for any $q\in(2,\frac{2N}{N-2s})$;
\item[3.]
for all $\frac12 < \mu  <\frac N2$,
\begin{equation}\label{frcs}
\sup_{x\neq0}|x|^{\frac N2-\mu}|u(x)|\leq C(N,\mu)\|(-\Delta)^\frac\mu2 u\|,\quad\forall u\in H^\mu_{rd}(\R^N ).\end{equation}
\end{enumerate}
\end{lem}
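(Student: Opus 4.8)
The plan is to prove the three assertions of Lemma~\ref{sblv} separately, since each rests on a different standard mechanism: a fractional Sobolev embedding, a Rellich--Kondrachov-type compactness argument exploiting radial symmetry, and a Strauss-type pointwise decay estimate.

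\textbf{Part 1 (continuous embedding).} For the endpoint exponents the embeddings are classical. First, $H^s \hookrightarrow L^2$ is immediate from the definition of the norm $\|\cdot\|_{H^s}$, which controls $\|\cdot\|_2$. At the other endpoint, the fractional Sobolev inequality gives $\dot H^s \hookrightarrow L^{2N/(N-2s)}$ for $0<s<\tfrac N2$ (here $s\in(0,1)$ and $N\geq2$ guarantee $2s<N$); I would cite the stated reference \cite{co} or derive it from the Hardy--Littlewood--Sobolev inequality (Lemma~\ref{hls}) applied to the Riesz potential representation $u = I_{2s} * (-\Delta)^s u$, which yields $\|u\|_{2N/(N-2s)} \lesssim \|(-\Delta)^{s/2}u\|$. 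The intermediate cases $q\in(2,\tfrac{2N}{N-2s})$ then follow by interpolation: writing $\tfrac1q = \tfrac{1-\theta}2 + \theta\tfrac{N-2s}{2N}$ with $\theta\in(0,1)$ and applying H\"older, one gets $\|u\|_q \le \|u\|_2^{1-\theta}\|u\|_{2N/(N-2s)}^{\theta} \lesssim \|u\|_{H^s}$.

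\textbf{Part 2 (compact radial embedding).} The strategy is the standard argument that radial symmetry upgrades the bounded (non-compact) Sobolev embedding to a compact one on the open range $q\in(2,\tfrac{2N}{N-2s})$. Given a bounded sequence $(u_n)$ in $H^s_{rd}$, after passing to a subsequence we have $u_n \rightharpoonup u$ weakly in $H^s$. On any ball $B_\rho$ the embedding $H^s(B_\rho)\hookrightarrow\hookrightarrow L^q(B_\rho)$ is compact by the classical Rellich--Kondrachov theorem for fractional spaces, so $u_n\to u$ strongly in $L^q(B_\rho)$. The key point is to control the tails $\|u_n\|_{L^q(|x|>\rho)}$ uniformly, and this is exactly where radiality enters: by the Strauss-type decay bound (which I would establish in Part~3, applied with $\mu=s$) one has $|u_n(x)|\lesssim |x|^{-(N/2-s)}\|(-\Delta)^{s/2}u_n\|$ for $|x|$ large, and combining this pointwise decay with the uniform $L^2$ bound through an interpolation-type estimate over the exterior region forces the tails to vanish uniformly as $\rho\to\infty$ precisely when $q$ is strictly between $2$ and the critical exponent. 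A diagonal argument over $\rho\to\infty$ then yields strong convergence $u_n\to u$ in $L^q(\R^N)$.

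\textbf{Part 3 (radial pointwise decay).} For the Strauss-type inequality \eqref{frcs} I would reduce to showing that the map $u\mapsto \sup_{x\neq0}|x|^{N/2-\mu}|u(x)|$ is controlled by $\|(-\Delta)^{\mu/2}u\|$ on radial functions, in the range $\tfrac12<\mu<\tfrac N2$. The cleanest route is to invoke the result from the cited literature, but to make the plan self-contained I would sketch the Fourier-analytic argument: for a radial function, write $u = I_\mu * f$ with $f=(-\Delta)^{\mu/2}u\in L^2$ (using $\mu<\tfrac N2$ so the Riesz potential is well defined), and estimate the convolution at a point $x$ by splitting the kernel into near and far contributions relative to $|x|$; the radial Strauss bound then emerges from a scaling/Cauchy--Schwarz argument exploiting that radial $L^2$ mass concentrates under the weight $|x|^{N-1}$. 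The lower constraint $\mu>\tfrac12$ is precisely what is needed for the relevant weighted integral to converge. I expect this part to be the main obstacle, since the fractional Strauss estimate is more delicate than its classical $H^1$ counterpart: the nonlocal operator $(-\Delta)^{\mu/2}$ does not admit a simple radial ODE representation, so the decay cannot be read off from a one-dimensional fundamental theorem of calculus as in the integer case, and care is needed to track the sharp power $N/2-\mu$ through the Riesz-potential splitting; I would therefore lean on the cited reference \cite{co} for the precise constant and verify only that the hypotheses $s\in(0,1)$, $N\ge2$ place $\mu=s$ in the admissible range.
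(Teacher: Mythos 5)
The paper itself does not prove Lemma \ref{sblv}: all three parts are quoted from the literature (\cite{co} for the embeddings and the Strauss-type bound \eqref{frcs}, the compact radial injection being the classical Strauss--Lions phenomenon), so your proposal is measured against the standard arguments rather than an in-paper proof. Within it, Part 1 is correct up to a slip worth fixing: the representation should be $u=I_s*\big((-\Delta)^{s/2}u\big)$, not $u=I_{2s}*\big((-\Delta)^{s}u\big)$. For $u\in H^s$ the function $(-\Delta)^{s}u$ lies only in $H^{-s}$, so Lemma \ref{hls} cannot be applied to it, whereas $(-\Delta)^{s/2}u\in L^2$ and the Hardy--Littlewood--Sobolev inequality with the kernel $|x|^{s-N}$ gives precisely $\|u\|_{\frac{2N}{N-2s}}\lesssim\|(-\Delta)^{\frac s2}u\|$, valid since $0<s<\frac N2$; interpolation then handles $q\in(2,\frac{2N}{N-2s})$ as you say. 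Part 3's sketch (far/near splitting of the Riesz potential, with $\mu<\frac N2$ controlling the far region by Cauchy--Schwarz and $\mu>\frac12$ making the angular average of the kernel square-integrable near the sphere $|y|\approx|x|$) is the right mechanism and matches the cited source.

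The genuine gap is in Part 2. Your tail control rests on \eqref{frcs} with $\mu=s$, and at the end of Part 3 you assert that the hypotheses $s\in(0,1)$, $N\geq2$ place $\mu=s$ in the admissible range $(\frac12,\frac N2)$. They do not: the lemma claims compactness for every $s\in(0,1)$, including $s\in(0,\frac12]$, and the paper genuinely needs this range --- Theorem \ref{t3} assumes only $s\in(0,1)$ and its proof invokes exactly the compact injection of Part 2. For $s\leq\frac12$ the pointwise bound \eqref{frcs} with $\mu=s$ is simply unavailable (radial $\dot H^s$ functions need not have bounded spherical values; this is the very reason the lemma imposes $\mu>\frac12$), so your argument does not produce uniform smallness of $\|u_n\|_{L^q(|x|>\rho)}$ on the full stated range. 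The standard repair avoids pointwise decay altogether: for radial $u$ and $|y|=R\geq2$, the intersection of each sphere $\{|x|=r\}$, $R-1<r<R+1$, with the unit ball $B_1(y)$ has surface measure $O(1)$ while the full sphere has measure $\simeq r^{N-1}$, whence by radial symmetry $\|u\|_{L^2(B_1(y))}^2\lesssim R^{-(N-1)}\|u\|^2$; feeding this into Lions' vanishing lemma (cover $\{|x|>\rho\}$ by unit balls of bounded overlap and interpolate $L^q$ between $L^2$ and $H^s$ on each ball) yields uniform tail smallness for every $2<q<\frac{2N}{N-2s}$ and every $s\in(0,1)$, after which your local Rellich--Kondrachov step and diagonal argument close the proof. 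For $s>\frac12$ your route is correct as written.
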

The following fractional Gagliardo-Nirenberg inequality will be useful \cite{mp}.
\begin{lem}\label{fgg}
Let $0<2s<N$. Then, there exists a positive constant $C:=C(N,s)>0$ such that, 
$$\|\cdot\|_p\leq C\|\cdot\|^{1-\frac Ns(\frac12-\frac1q)}\|(-\Delta)^{\frac s2}\cdot\|^{\frac Ns(\frac12-\frac1q)}.$$
\end{lem}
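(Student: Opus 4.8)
The plan is to obtain this classical interpolation inequality by combining the critical fractional Sobolev embedding with the log-convexity of Lebesgue norms. Throughout I read the left-hand side as $\|u\|_q$, with $q$ ranging over $[2,\frac{2N}{N-2s}]$, since this is exactly the range in which the exponent $\theta:=\frac Ns(\frac12-\frac1q)$ lies in $[0,1]$; the two endpoints are either trivial ($\theta=0$ at $q=2$) or reduce to the pure Sobolev inequality ($\theta=1$ at $q=\frac{2N}{N-2s}$).

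First I would record the critical fractional Sobolev inequality: since $0<2s<N$, one has $\|u\|_{\frac{2N}{N-2s}}\leq C(N,s)\,\|(-\Delta)^{\frac s2}u\|$. This may either be cited, or derived from the Riesz-potential representation $u=c_{N,s}\,|\cdot|^{-(N-s)}*(-\Delta)^{\frac s2}u$ together with the Hardy--Littlewood--Sobolev inequality of Lemma \ref{hls}, taken in its equivalent mapping form $\|\,|\cdot|^{-(N-s)}*f\|_{\frac{2N}{N-2s}}\lesssim\|f\|$, applied to $f=(-\Delta)^{\frac s2}u$. The exponent relation $\frac{N-2s}{2N}=\frac12-\frac sN$ is precisely the admissibility condition required by Lemma \ref{hls}, and the passage from the bilinear form to the mapping form is a routine duality pairing.

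Next, for intermediate $q\in(2,\frac{2N}{N-2s})$ I would interpolate between $L^2$ and $L^{\frac{2N}{N-2s}}$. Writing $\frac1q=\frac{1-\theta}2+\theta\cdot\frac{N-2s}{2N}$ and applying H\"older's inequality gives $\|u\|_q\leq\|u\|^{1-\theta}\|u\|_{\frac{2N}{N-2s}}^{\theta}$; substituting the Sobolev bound from the previous step yields $\|u\|_q\leq C\,\|u\|^{1-\theta}\|(-\Delta)^{\frac s2}u\|^{\theta}$. A direct computation from the interpolation identity shows $\frac12-\frac1q=\frac{\theta s}N$, i.e. $\theta=\frac Ns(\frac12-\frac1q)$, matching the claimed exponent. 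As an independent check, the dilation $u\mapsto u(\lambda\cdot)$ forces the same $\theta$, since $\|u\|_q$, $\|u\|$ and $\|(-\Delta)^{\frac s2}u\|$ scale as $\lambda^{-N/q}$, $\lambda^{-N/2}$ and $\lambda^{s-N/2}$ respectively, so balancing the powers of $\lambda$ reproduces $\theta=\frac Ns(\frac12-\frac1q)$.

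The only substantive input is the critical Sobolev embedding; everything else is elementary interpolation and exponent bookkeeping. Hence the main obstacle, were one to insist on a self-contained argument, is establishing that embedding --- but in the present setting it is furnished immediately by Lemma \ref{hls} through the Riesz representation, so no genuine difficulty remains beyond verifying the admissibility exponents.
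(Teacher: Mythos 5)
Your argument is correct, and it is worth noting that the paper itself offers no proof of this lemma: it is quoted verbatim from the cited reference of Morosi and Pizzocchero \cite{mp}, so there is no internal proof to compare against. Your derivation is the standard one and is complete: the critical embedding $\|u\|_{\frac{2N}{N-2s}}\lesssim\|(-\Delta)^{\frac s2}u\|$ follows exactly as you say from the Riesz representation $u=c_{N,s}\,|\cdot|^{-(N-s)}*(-\Delta)^{\frac s2}u$ (valid since $0<s<N$) together with the dual/mapping form of Lemma \ref{hls} with kernel exponent $\lambda=N-s\in(0,N)$, and the H\"older interpolation $\|u\|_q\leq\|u\|^{1-\theta}\|u\|_{\frac{2N}{N-2s}}^{\theta}$ with $\frac1q=\frac{1-\theta}2+\theta\bigl(\frac12-\frac sN\bigr)$ gives precisely $\theta=\frac Ns\bigl(\frac12-\frac1q\bigr)$; your dilation check confirms the exponent is forced. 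Two small points in your favor: you correctly silently repaired the paper's typo (the left-hand side $\|\cdot\|_p$ should read $\|\cdot\|_q$, the exponent appearing on the right), and you correctly identified the admissible range $q\in[2,\frac{2N}{N-2s}]$, i.e. $\theta\in[0,1]$, without which the stated inequality is false --- a restriction the paper's statement omits but which is the form actually proved in \cite{mp} and used in the paper (e.g.\ in the proof of Proposition \ref{viril}, where $\mu/s\leq1$ is needed).
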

One will use the next fractional chain rule \cite{cw}.
\begin{lem}\label{chain}
Let $s\in(0,1]$ and $1<p,p_i,q_i<\infty$ satisfying $\frac1p=\frac1{p_i}+\frac1{q_i}$. Then,
\begin{enumerate}
\item[1.]
if $G\in C^1(\C)$, then
$$ \|(-\Delta)^{\frac s2}[G(u)]\|_{p}\lesssim \|G'(u)\|_{p_1}\|(-\Delta)^{\frac s2}u\|_{q_1};$$
\item[2.]
$$\|(-\Delta)^{\frac s2}(uv)\|_{p}\lesssim \|(-\Delta)^{\frac s2}u\|_{p_1}\|v\|_{q_1}+\|(-\Delta)^{\frac s2} v\|_{p_2}\|u\|_{q_2}.$$
\end{enumerate}
\end{lem}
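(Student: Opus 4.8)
The plan is to recognize these as the Christ--Weinstein fractional Leibniz and chain rule estimates and to prove them by harmonic analysis, treating the trivial endpoint $s=1$ separately. For $s=1$ both lines reduce to the classical product and chain rules, $\nabla(uv)=u\nabla v+v\nabla u$ and $\nabla[G(u)]=G'(u)\nabla u$, after which $\||\nabla|\,\cdot\|_p\approx\|\nabla\cdot\|_p$ (boundedness of the Riesz transforms on $L^p$, $1<p<\infty$) and H\"older with $\frac1p=\frac1{p_i}+\frac1{q_i}$ close the estimates. So I would fix $s\in(0,1)$ and use two equivalent descriptions of the homogeneous norm, valid for $1<p<\infty$: the Littlewood--Paley square function $\|(-\Delta)^{\frac s2}f\|_p\approx\big\|(\sum_j 2^{2js}|\Delta_j f|^2)^{1/2}\big\|_p$, and the Stein--Strichartz pointwise square function $\mathcal D_s f(x):=\Big(\int_0^\infty\big(t^{-N}\int_{|h|\le t}|f(x+h)-f(x)|\,dh\big)^2\,\frac{dt}{t^{1+2s}}\Big)^{1/2}$, which obeys $\|(-\Delta)^{\frac s2}f\|_p\approx\|\mathcal D_s f\|_p$. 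The Leibniz rule is handled most cleanly with the first description, the chain rule with the second.

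For Part 2 I would apply Bony's paraproduct decomposition $uv=\sum_j \Delta_j u\,S_{j-1}v+\sum_j S_{j-1}u\,\Delta_j v+\sum_{|j-k|\le1}\Delta_j u\,\Delta_k v$ and then $(-\Delta)^{\frac s2}$. Since each dyadic block is frequency-localized, Bernstein's inequality transfers the $2^{js}$ weight onto the high-frequency factor in each paraproduct. Estimating the resulting square functions by H\"older with $\frac1p=\frac1{p_1}+\frac1{q_1}=\frac1{p_2}+\frac1{q_2}$, and controlling the low-frequency truncations $S_{j-1}$ pointwise by the Hardy--Littlewood maximal operator through the Fefferman--Stein vector-valued maximal inequality, the low-high and high-low pieces produce exactly $\|(-\Delta)^{\frac s2}u\|_{p_1}\|v\|_{q_1}$ and $\|(-\Delta)^{\frac s2}v\|_{p_2}\|u\|_{q_2}$; the high-high remainder, where $s>0$ makes the sum $\sum_j 2^{js}(\cdots)$ geometrically summable, is absorbed into either term.

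For Part 1 I would work with $\mathcal D_s$. The fundamental theorem of calculus gives $|G(u(x+h))-G(u(x))|\le|u(x+h)-u(x)|\int_0^1|G'(u(x)+\theta(u(x+h)-u(x)))|\,d\theta$, so $\mathcal D_s[G(u)](x)$ is dominated by the square function of $u$ weighted by these averaged values of $|G'|$. The heart of the matter is the pointwise bound $\mathcal D_s[G(u)](x)\lesssim \mathcal M[|G'(u)|](x)\,\mathcal D_s u(x)$, where $\mathcal M$ is the Hardy--Littlewood maximal operator. Taking $L^p$ norms, H\"older together with the boundedness of $\mathcal M$ on $L^{p_1}$ (legitimate since $p_1>1$), namely $\|\mathcal M[|G'(u)|]\|_{p_1}\lesssim\|G'(u)\|_{p_1}$, and $\|\mathcal D_s u\|_{q_1}\approx\|(-\Delta)^{\frac s2}u\|_{q_1}$, then yield the claim.

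The main obstacle is Part 1 under the bare hypothesis $G\in C^1$: with no modulus of continuity on $G'$, one cannot replace the segment average $\int_0^1|G'(\cdots)|\,d\theta$ by $|G'(u(x))|$, nor pull the $G'$ factor out of the coupled $(t,h)$ integrals defining $\mathcal D_s$. Establishing the pointwise maximal-function domination above, uniformly in the scale $t$ and in the segment parameter $\theta$ while keeping the exponents tied to $\frac1p=\frac1{p_1}+\frac1{q_1}$, is exactly the delicate estimate carried out by Christ--Weinstein; all the remaining ingredients (Bernstein, Fefferman--Stein, H\"older, Riesz transform bounds) are standard once the square-function equivalences are in place. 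Accordingly I would present the paraproduct computation for the product rule in full and invoke \cite{cw} for the maximal-function step underlying the chain rule.
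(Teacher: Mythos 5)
The paper offers no proof of this lemma at all: it is quoted as a known tool with the citation \cite{cw}, and your sketch reconstructs exactly the standard route behind that citation (Bony paraproducts, Bernstein and Fefferman--Stein for the Leibniz rule; the Strichartz-type square function $\mathcal D_s$ plus maximal-function domination for the chain rule), deferring the one genuinely delicate step to \cite{cw} just as the paper implicitly does. Your caveat is also well placed and worth recording: as literally stated the chain rule is not known for bare $G\in C^1(\C)$ --- Christ--Weinstein require a segment-average hypothesis on $G'$, e.g. $|G'(\tau v+(1-\tau)w)|\le\mu(\tau)\bigl(|G'(v)|+|G'(w)|\bigr)$ with $\mu\in L^1(0,1)$ (satisfied by the power-type nonlinearities $|u|^{p-2}u$, $|u|^p$ actually used in this paper, and under which the maximal bound appears with $\mathcal M\bigl(|G'(u)|^r\bigr)^{1/r}$ for some $1<r<p_1$ rather than your cleaner first-power version) --- so your refusal to claim the pointwise estimate under the bare $C^1$ hypothesis is the correct reading of the literature.
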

\begin{defi}
A couple of real numbers $(q,r)$ is said to be admissible if 
$$q\geq2,\quad r\in[2,\infty),\quad (q,r)\neq(2,\frac{4N-2}{4N-3})\quad\mbox{and}\quad N(\frac12-\frac1r)=\frac{2s}q.$$
Denote the set of admissible pairs by $\Gamma$ and $(q,r)\in\Gamma'$ if $(q',r')\in\Gamma$.
\end{defi}
The so-called {radial} Strichartz estimate \cite{gw} ends this section.
\begin{prop}\label{prop2}
Let $N \geq 2$, $\frac{N}{2N-1}\leq s<1$ and $u_0\in L^2_{rd}$. Then
$$\|u\|_{L^q_t(L^r)}\lesssim\|u_0\|+\|i\dot u-(-\Delta)^s u\|_{L^{\tilde q'}_t(L^{\tilde r'})},$$
if $(q, r)$ and $(\tilde q,\tilde r)$ are admissible pairs.
\end{prop}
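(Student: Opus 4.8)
The plan is to read the stated inequality as a (radial) Strichartz estimate for the free fractional propagator $U(t):=e^{-it(-\Delta)^s}$, and to prove it by the $TT^*$ scheme of Keel--Tao combined with the radial dispersive improvement. Writing $F:=i\dot u-(-\Delta)^s u$, Duhamel's formula gives
$$u(t)=U(t)u_0-i\int_0^t U(t-\tau)F(\tau)\,d\tau,$$
so by the triangle inequality it suffices to establish two bounds: the homogeneous estimate $\|U(\cdot)u_0\|_{L^q_t(L^r)}\lesssim\|u_0\|$ for every admissible $(q,r)$, and the retarded inhomogeneous estimate $\|\int_0^\cdot U(\cdot-\tau)F(\tau)\,d\tau\|_{L^q_t(L^r)}\lesssim\|F\|_{L^{\tilde q'}_t(L^{\tilde r'})}$ for admissible $(q,r),(\tilde q,\tilde r)$. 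Since $(-\Delta)^s$ is a radial Fourier multiplier, $U(t)$ preserves radial symmetry, so one may work throughout with radial data.

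First I would treat the homogeneous estimate. By the $TT^*$ argument it is equivalent to the non-retarded bound $\|\int_\R U(\cdot-\tau)g(\tau)\,d\tau\|_{L^q_t(L^r)}\lesssim\|g\|_{L^{q'}_t(L^{r'})}$, which is controlled once one has the energy estimate $\|U(t)f\|\lesssim\|f\|$ (immediate by Plancherel, since $|e^{-it|\xi|^{2s}}|=1$) together with a time-decay estimate for $U(t)U(\tau)^*$. Here the radial hypothesis enters decisively: decomposing $U(t)=\sum_j U(t)P_j$ by Littlewood--Paley and using the Fourier--Bessel (Hankel) representation of radial functions, the frequency-localized radial kernel disperses faster than in the generic case, because the Bessel asymptotics $J_\nu(\rho)\sim\rho^{-1/2}$ furnish extra decay in the spatial variable. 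Feeding this improved decay into the Keel--Tao interpolation machinery yields the homogeneous estimate over the full admissible range $N(\frac12-\frac1r)=\frac{2s}q$, $q\geq2$, with the single endpoint $(q,r)=(2,\frac{4N-2}{4N-3})$ removed.

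With the homogeneous estimate and its dual in hand, the inhomogeneous estimate for two distinct admissible pairs follows by composition: one bounds $\|\int_\R U(\cdot-\tau)F(\tau)\,d\tau\|_{L^q_t(L^r)}$ by applying the homogeneous estimate for $(q,r)$ to the element $\int_\R U(-\tau)F(\tau)\,d\tau\in L^2$, and then controls the latter $L^2$ quantity by $\|F\|_{L^{\tilde q'}_t(L^{\tilde r'})}$ via the dual homogeneous estimate for $(\tilde q,\tilde r)$. To replace the full integral by the retarded one $\int_0^t$ appearing in Duhamel's formula, I would invoke the Christ--Kiselev lemma, applicable in the non-diagonal regime $\tilde q'<q$; since $\tilde q\geq2$ forces $\tilde q'\leq2\leq q$, this holds except at the double endpoint $\tilde q=q=2$, which is absorbed by the endpoint exclusion or handled by a direct argument.

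The hard part will be the radial dispersive estimate for the frequency-localized propagator. Unlike the classical Schr\"odinger case, the phase $|\xi|^{2s}$ is homogeneous of degree $2s\neq2$, so the standard stationary-phase computation must be redone for this symbol, and the gain coming from radiality must be quantified precisely enough to reach $q=2$. The lower bound $s\geq\frac N{2N-1}$ is exactly what makes the resulting time-decay integrable over the admissible range, while the excluded pair $(2,\frac{4N-2}{4N-3})$ marks where the radial Stein--Tomas / Keel--Tao endpoint degenerates. Since this estimate is available in the cited work \cite{gw}, one may alternatively quote it directly once the admissibility and radiality hypotheses are matched.
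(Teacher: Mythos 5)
The paper offers no proof of this proposition: it appears in the ``Useful estimates'' subsection as quoted background, attributed wholesale to \cite{gw}. So the escape hatch in your final sentence --- matching the admissibility and radiality hypotheses and citing \cite{gw} directly --- is precisely the paper's entire argument, and is the only portion of your text the paper itself would certify.

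As a blind reconstruction your architecture is the standard and essentially correct one: Duhamel splitting with $U(t)=e^{-it(-\Delta)^s}$, $TT^*$ reduction of the homogeneous bound, composition through $L^2$ for two distinct admissible pairs, and Christ--Kiselev to pass to the retarded integral in the non-diagonal regime $\tilde q'<q$. The genuine weak point is the step you yourself flag as ``the hard part''. Radiality does not upgrade the time decay of the frequency-localized kernel in the $L^1\to L^\infty$ sense that the Keel--Tao scheme consumes: writing the unit-frequency radial kernel as a one-dimensional oscillatory integral against Bessel functions, the asymptotics $J_\nu(\rho)\sim\rho^{-1/2}$ move the gain into \emph{spatial weights}, yielding a bound of the shape $|K(t,r,r')|\lesssim |t|^{-1/2}(rr')^{-(N-1)/2}$ for $r,r'\gtrsim1$, with only the one-dimensional stationary-phase decay $|t|^{-1/2}$ in time. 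A weighted estimate of this type cannot simply be ``fed into the Keel--Tao interpolation machinery''; what \cite{gw} actually do is prove frequency-localized estimates that are additionally localized in dyadic spatial annuli and then sum, and it is in this summation that both the threshold $s\geq\frac N{2N-1}$ (equivalently the constraint $\frac1q\leq\frac{2N-1}2\big(\frac12-\frac1r\big)$ evaluated at admissibility) and the endpoint exclusion arise. So your outline needs either that localized-summation argument, or a weighted-norm interpolation scheme, in place of ``improved decay plus Keel--Tao'' --- or the direct citation, which is what the paper uses. One side remark in your favor: as printed in the paper the excluded pair $(2,\frac{4N-2}{4N-3})$ has $r<2$ and is vacuous as a restriction; at $q=2$ and $s=\frac N{2N-1}$ admissibility gives $r=\frac{4N-2}{2N-3}$, which is the degenerate endpoint your last sentence correctly has in mind.
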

\section{Existence of ground states}
This section is devoted to prove Theorem \ref{t3} about the existence of ground states. Let us give some intermediary results.
\begin{lem}
For any $u\in H^s$ and $\lambda>0$
$$\min\{H(u),\partial_\lambda H(u^\lambda)\}>0.$$
\end{lem}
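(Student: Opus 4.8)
The plan is to exploit the fact that $u^\lambda$ is the $L^2$-invariant dilation $u^\lambda(x):=\lambda^{N/2}u(\lambda x)$, under which the mass is frozen while the non-local term scales as a pure power of $\lambda$. Writing $\mathcal N(u):=\int_{\R^N}(I_\alpha*|\cdot|^b|u|^p)|x|^b|u|^p\,dx$, I recall from the definitions that $H(u)=\|u\|^2+\frac{B-2}{2p}\mathcal N(u)$, so the whole statement reduces to controlling the signs of two elementary quantities. The strategy is therefore: (i) fix the signs of $\mathcal N(u)$ and of $B-2$; (ii) scale; (iii) read off both inequalities.

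First I would record the two structural sign facts. Since the Riesz kernel $|\cdot|^{-(N-\alpha)}$ and the weight $|x|^b$ are positive, the integrand of $\mathcal N$ is non-negative; moreover, for $0\neq u\in H^s$ the function $|\cdot|^b|u|^p$ is non-trivial and non-negative, its Riesz potential is consequently strictly positive everywhere, and hence $\mathcal N(u)>0$. Second, since the present section works in the mass super-critical regime $p_*<p<p^*$, and $B=2$ precisely at $p=p_*$ (indeed $Np_*-N-\alpha-2b=2s$) with $B$ increasing in $p$, one has $B-2=\frac{Np-N-\alpha-2b-2s}{s}>0$.

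The single computational step is the scaling of $\mathcal N$. A change of variables $z=\lambda y$ inside the convolution, followed by $w=\lambda x$ in the outer integral, yields $\|u^\lambda\|=\|u\|$ together with $\mathcal N(u^\lambda)=\lambda^{Np-N-\alpha-2b}\mathcal N(u)=\lambda^{sB}\mathcal N(u)$; this is the only place where the precise homogeneities $N-\alpha$ of the kernel and $b$ of the weight enter, and bookkeeping them correctly is the main (though routine) obstacle. It follows that $H(u^\lambda)=\|u\|^2+\frac{B-2}{2p}\lambda^{sB}\mathcal N(u)$, whence $\partial_\lambda H(u^\lambda)=\frac{sB(B-2)}{2p}\lambda^{sB-1}\mathcal N(u)$.

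Finally I would conclude by inspection. Taking $\lambda=1$ gives $H(u)\geq\|u\|^2>0$, since $B-2>0$ and $\mathcal N(u)\geq0$; and $\partial_\lambda H(u^\lambda)>0$ for every $\lambda>0$, since the factors $s$, $B$, $B-2$, $\lambda^{sB-1}$ and $\mathcal N(u)$ are all strictly positive for $0\neq u\in H^s$. Hence $\min\{H(u),\partial_\lambda H(u^\lambda)\}>0$, as claimed. I expect no essential difficulty beyond the homogeneity count in the scaling identity; the remainder is sign-checking driven by the super-criticality $p>p_*$.
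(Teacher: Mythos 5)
Your proof is correct and follows essentially the same route as the paper: a mass-invariant rescaling under which the Choquard term scales as a positive power of $\lambda$, combined with $B-2>0$ (mass super-criticality) and strict positivity of the nonlocal integral for $0\neq u$. The only cosmetic difference is the normalization of the dilation --- the paper's computations correspond to $u^\lambda(x)=\lambda u(\lambda^{2/N}x)$, so its exponent is $\lambda^{2sB/N}$ rather than your $\lambda^{sB}$ --- which changes nothing in the sign analysis.
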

\begin{proof}
A direct computation gives 
\begin{gather*}
H(u)=\|u\|^2+\frac{B-2}{2p}\int_{\R^N}(I_\alpha*|\cdot|^b|u|^p)|x|^b|u|^p\,dx;\\
H(u^\lambda)=\|u\|^2+\frac{B-2}{2p}\lambda^{\frac{2sB}N}\int_{\R^N}(I_\alpha*|\cdot|^b|u|^p)|x|^b|u|^p\,dx;\\
\partial_\lambda H(u^\lambda)=\frac{B(B-2)}{Np}\lambda^{\frac{2sB}N-1}\int_{\R^N}(I_\alpha*|\cdot|^b|u|^p)|x|^b|u|^p\,dx.
\end{gather*}
\end{proof}
The constraint is positive if its quadratic part vanishes.
\begin{lem} \label{K>0}
 Let $0 \neq u_n$ be a bounded sequence of $H^s$ such that
$$ \lim_n\|(-\Delta)^{\frac s2}u_n\| =0.$$
Then, there exists $n_0\in \N$ such that $K(u_n) >0$ for all $n\geq n_0.$
\end{lem}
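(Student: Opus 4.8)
The plan is to rewrite the sign condition on $K(u_n)$ as a competition between the kinetic term and the nonlocal nonlinearity, then control the latter with the sharp Gagliardo--Nirenberg inequality of Theorem~\ref{gag}. Unwinding the definition of $K$, the assertion $K(u_n)>0$ is equivalent to
$$\|(-\Delta)^{\frac s2}u_n\|^2>\frac{B}{2p}\int_{\R^N}(I_\alpha*|\cdot|^b|u_n|^p)|x|^b|u_n|^p\,dx,$$
so everything reduces to showing that for $n$ large the right-hand side is strictly dominated by the kinetic energy.

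First I would invoke \eqref{ineq} to bound the Choquard term by $C\|u_n\|^A\|(-\Delta)^{\frac s2}u_n\|^B$. Since $u_n\neq0$ forces $\|(-\Delta)^{\frac s2}u_n\|>0$, the target inequality becomes, after dividing by $\|(-\Delta)^{\frac s2}u_n\|^2$, the condition
$$\delta_n:=\frac{BC}{2p}\,\|u_n\|^A\,\|(-\Delta)^{\frac s2}u_n\|^{B-2}<1.$$
It thus suffices to prove $\delta_n\to0$, which at once supplies the threshold index $n_0$.

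The decisive structural fact is that $B>2$. A direct computation gives $B=2$ exactly at $p=p_*$, and $\partial_pB=\frac Ns>0$, so $B$ is strictly increasing in $p$; the standing assumption $p>p_*$ of this section therefore yields $B-2>0$. Consequently $\|(-\Delta)^{\frac s2}u_n\|^{B-2}\to0$ by hypothesis, while $\|u_n\|^A$ stays bounded as soon as $A=2p-B\geq0$, because the sequence is bounded in $H^s$. Hence $\delta_n\to0$, as desired.

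The one point deserving care is the bookkeeping on the exponents, which is also the main obstacle: one must pin down that $p>p_*$ forces $B-2>0$ and that $A\geq0$ on the relevant range, so that the factor $\|u_n\|^A$ does not blow up along a sequence whose $L^2$-norm may itself degenerate. Since $A$ is decreasing in $p$ with $A=0$ at $p=p^*$, one gets $A>0$ throughout $p_*<p<p^*$ when $N\geq3$; the planar case $N=2$, where $p^*=\infty$ and $A$ may become negative for large $p$, needs a separate remark (for instance that the $L^2$-norms remain bounded below along the sequence to which the lemma is applied). Once these sign conditions are secured, the estimate is routine.
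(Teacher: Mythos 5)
Your proof is correct and takes essentially the same route as the paper: the paper's own argument likewise bounds the Choquard term by $C\|u_n\|^A\|(-\Delta)^{\frac s2}u_n\|^B$ via \eqref{ineq} and uses $B>2$ together with the $H^s$-boundedness of $(u_n)$ to conclude that this term is $o\big(\|(-\Delta)^{\frac s2}u_n\|^2\big)$, whence $K(u_n)\simeq\frac{4s}N\|(-\Delta)^{\frac s2}u_n\|^2>0$ for large $n$. The only addition is your bookkeeping on the sign of $A$, which the paper passes over in silence; since $A>0$ is equivalent to the energy-subcritical condition $s_c<s$, i.e. $p<1+\frac{2s+2b+\alpha}{N-2s}$, your caveat about $N=2$ reflects the paper's loose convention $p^*=\infty$ in that dimension rather than a genuine obstruction, and under the intended hypotheses the factor $\|u_n\|^A$ is indeed bounded.
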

\begin{proof}
Since $B>2$,
\begin{eqnarray*}
\int_{\R^N}(I_\alpha *|\cdot|^b|u_n|^p)|x|^b|u_n|^p\,dx
&\leq&C\|u_n\|^A\|(-\Delta)^{\frac s2} u_n\|^B\\
&=&o\Big(\|(-\Delta)^{\frac s2}u_n\|^2\Big).
\end{eqnarray*}
Thus, when $n\to\infty$,
$$K(u_n)\simeq\frac{4s}N\|(-\Delta)^{\frac s2}u_n\|^2> 0 . $$
\end{proof}
The minimizing problem \eqref{min} can be expressed with a negative constraint.
\begin{lem}\label{Lemma} 
One has
$$m = \inf_{0\neq u\in H^s}\{H(u)\quad\mbox{s.\, t.}\quad K(u)\leq 0\}.$$
\end{lem}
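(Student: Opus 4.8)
The plan is to lean on the algebraic identity $S(u)=H(u)+\frac{N}{4s}K(u)$ that is built into the definition of $H$. On the constraint set $\{K=0\}$ the two functionals coincide, so
$$m=\inf_{0\neq u\in H^s}\{S(u):\,K(u)=0\}=\inf_{0\neq u\in H^s}\{H(u):\,K(u)=0\}.$$
Since $\{K=0\}\subseteq\{K\leq0\}$, this immediately gives one inequality, namely $\inf\{H(u):\,K(u)\leq0\}\leq m$. The whole content of the lemma is therefore the reverse bound: $H(u)\geq m$ for every $u\neq0$ with $K(u)\leq0$.

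To prove it I would use the mass-preserving dilation $u^\lambda$ already employed in the first lemma of this section. Writing $P(u):=\int_{\R^N}(I_\alpha*|\cdot|^b|u|^p)|x|^b|u|^p\,dx$, which is strictly positive for $u\neq0$ because $I_\alpha$ is a positive kernel and $|x|^b|u|^p$ is nonnegative and not identically zero, the first lemma records that $\partial_\lambda H(u^\lambda)>0$, so $\lambda\mapsto H(u^\lambda)$ is strictly increasing. In parallel the mass is frozen, while $\|(-\Delta)^{\frac s2}u^\lambda\|^2=\lambda^{\frac{4s}N}\|(-\Delta)^{\frac s2}u\|^2$ and $P(u^\lambda)=\lambda^{\frac{2sB}N}P(u)$, so that
$$K(u^\lambda)=\frac{4s}{N}\lambda^{\frac{4s}N}\Big(\|(-\Delta)^{\frac s2}u\|^2-\frac{B}{2p}\lambda^{\frac{2s(B-2)}N}P(u)\Big).$$

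The key point, and the only place the hypothesis $p_*<p$ (equivalently $B>2$) is used, is that the exponent $\frac{2s(B-2)}N$ is strictly positive: the bracket is then strictly decreasing in $\lambda$, running from $\|(-\Delta)^{\frac s2}u\|^2>0$ as $\lambda\to0^+$ to $-\infty$ as $\lambda\to\infty$. Hence it vanishes at a unique $\lambda_0>0$, with $K(u^\lambda)>0$ for $\lambda<\lambda_0$ and $K(u^\lambda)<0$ for $\lambda>\lambda_0$. Given $u$ with $K(u)=K(u^1)\leq0$, this single sign change forces $\lambda_0\leq1$.

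Finally I would conclude by monotonicity. Since $\lambda\mapsto H(u^\lambda)$ is increasing and $\lambda_0\leq1$, while $K(u^{\lambda_0})=0$ makes $u^{\lambda_0}$ admissible for the problem defining $m$,
$$m\leq S(u^{\lambda_0})=H(u^{\lambda_0})\leq H(u^1)=H(u),$$
the middle equality being the identity $S=H$ on $\{K=0\}$. Taking the infimum over all admissible $u$ yields $m\leq\inf\{H(u):\,K(u)\leq0\}$, and together with the easy direction this gives the claimed equality. The main obstacle is nothing deep but purely the sign and monotonicity bookkeeping for $K(u^\lambda)$; once the relative ordering of the scaling exponents ($\frac{2sB}N>\frac{4s}N$, i.e. $B>2$) is in hand, everything else follows from the decomposition $S=H+\frac{N}{4s}K$ and the positivity of $P$.
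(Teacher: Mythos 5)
Your proof is correct and follows essentially the same route as the paper: rescale $u$ by the mass-preserving dilation until $K(u^{\lambda_0})=0$ at some $\lambda_0\leq 1$, then use the monotonicity of $\lambda\mapsto H(u^\lambda)$ together with $S=H$ on $\{K=0\}$ to get $m\leq H(u^{\lambda_0})\leq H(u)$. The only cosmetic difference is that you locate $\lambda_0$ by an explicit computation of the sign of $K(u^\lambda)$, whereas the paper invokes its small-gradient positivity lemma for $K$ plus a continuity argument; the mechanism is identical.
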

\begin{proof} 
Denoting by $r$ the right hand side of the previous equality, it is sufficient to prove that $m_{a,c}\leq r.$ Take $u\in H^s$ such that $K(u)<0.$ Because $\lim_{\lambda\rightarrow0}\|(-\Delta)^{\frac s2}u^\lambda\|=0,$ by the previous Lemma, there exists $\lambda\in(0,1)$ such that $K({ u}^\lambda)>0.$ With a continuity argument there exists $\lambda_0\in(0,1)$ such that $K ({ u}^{\lambda_0}) = 0,$ then since $\lambda\mapsto H({ u}^\lambda)$ is increasing, one gets
$$m\leq H({ u}^{\lambda_0}) \leq H(u).$$
This closes the proof.
\end{proof}{}
\begin{proof}[Proof of theorem \ref{t3}]
Let $(\phi_n)$ be a minimizing sequence, namely
\begin{equation} \label{suite}
0\neq \phi_n \in H^s,\quad K(\phi_n) = 0\quad \mbox{and}\quad \lim_n H(\phi_n) = \lim_n S(\phi_n) = m.
\end{equation}
With a rearrangement argument via Lemma \ref{Lemma}, we can assume that $(\phi_n)$ is radial decreasing.\\
$\bullet$ $(\phi_n)$ is bounded in $H^s.$\\
Since 
$$H(\phi_n)=\|\phi_n\|^2+\frac{B-2}{2p}\int_{\R^N}(I_\alpha*|\cdot|^b|\phi_n|^p)|x|^b|\phi_n|^p\,dx\to m,$$
it follows that
$$\sup_n\int_{\R^N}(I_\alpha*|\cdot|^b|\phi_n|^p)|x|^b|\phi_n|^p\,dx<\infty.$$
Then, because
$$S(\phi_n)=\|\phi_n\|_{H^s}^2-\frac1p\int_{\R^N}(I_\alpha*|\cdot|^b|\phi_n|^p)|x|^b|\phi_n|^p\,dx\to m,$$
one gets
$$\sup_n\|\phi_n\|_{H^s}<\infty.$$
$\bullet$ The limit of $(\phi_n)$ is nonzero and $m>0.$\\
Taking account of the compact injection in Lemma \ref{sblv}, take
$$ \phi_n \rightharpoonup \phi\quad \mbox{in}\quad H^s$$
and for all $2<p<\frac{2N}{N-2s},$
$$ \phi_n \rightarrow \phi \quad\mbox{in}\quad L^{p}.$$
The equality $K(\phi_n) =0$ implies that
$$ \|(-\Delta)^{\frac s2} \phi_n\|^2=\frac{B}{2p}\int_{\R^N}(I_\alpha*|\cdot|^b|\phi_n|^p )|x|^b| \phi_n|^p\,dx.$$
Assume that $\phi =0$. Thanks to Hardy-Littlewood-Paley inequality, one gets
$$\int_{\R^N}(I_\alpha*|\cdot|^b|\phi_n|^p )|x|^b| \phi_n|^p\,dx \lesssim\||x|^b\phi_n^p\|_{\frac{2N}{\alpha+N}}^{2}.$$
Take $\rho:=(\frac{N}{-b})^-$ and $r:=\frac{2Np}{\alpha+N-2|b|-\epsilon}$. Using H\"older inequality, write
$$\||x|^b\phi_n^p\|_{L^{\frac{2N}{\alpha+N}}(|x|<1)}\leq\||x|^b\|_{L^\rho(|x|<1)}\|\phi_n\|_{r}^p\lesssim\|\phi_n\|_{r}^p.$$
Since $\tilde p<p<p^*$, taking $\epsilon\to0$, it follows that $2<r<\frac{2N}{N-2s}$. Then,
$$\||x|^b\phi_n^p\|_{L^{\frac{2N}{\alpha+N}}(|x|<1)}\to0,\quad\mbox{as}\quad n\to\infty.$$
Similarly, one obtains
$$\||x|^b\phi_n^p\|_{L^{\frac{2N}{\alpha+N}}(|x|>1)}\to0,\quad\mbox{as}\quad n\to\infty.$$
Thus,
$$\int_{\R^N}(I_\alpha*|\cdot|^b|\phi_n|^p )|x|^b| \phi_n|^p\,dx \to0,\quad\mbox{as}\quad n\to\infty.$$
Now, by Lemma \ref{K>0} yields $K(\phi_n)>0$ for large $n$. This contradiction implies that 
$$\phi \neq 0.$$
With lower semi continuity of $\|\cdot\|_{H^s}$, one has
\begin{eqnarray*}
0 &=& \liminf_n K(\phi_n)\\
&\geq& \frac{4s}N \liminf_n\|(-\Delta)^{\frac s2} \phi_n\|^2-\frac{2sB}{Np}\int_{\R^N} (I_\alpha*|\cdot|^b|\phi|^p)|x|^b| \phi|^p\,dx\\
&\geq& K(\phi).
\end{eqnarray*}
Similarly, $H(\phi)\leq m.$ Moreover, with Lemma \ref{Lemma}, one can assume that
$$K(\phi) =0\quad\mbox{ and }\quad S(\phi) = H(\phi)\leq m.$$
 So, $\phi$ is a minimizer satisfying \eqref{suite}. Thus,
$$m=H(\phi)>0.$$
$\bullet$ The limit $\phi$ is a solution to \eqref{grnd}.\\
There is a Lagrange multiplier $\eta \in \R$ such that $S'(\phi) = \eta K'(\phi).$ Thus,
\begin{eqnarray*}
0 = K(\phi)
&=&\partial_\lambda(S(\phi^\lambda))_{|\lambda=1}\\
&=&\langle S'(\phi), \partial_\lambda(\phi^\lambda)_{|\lambda=1}\rangle\\
&=&\eta\langle K'(\phi),\partial_\lambda(\phi^\lambda)_{|\lambda=1}\rangle\\
&=&\eta\partial_\lambda(K(\phi^\lambda))_{|\lambda=1}.
\end{eqnarray*}
Compute
\begin{gather*}
K(\phi^\lambda)=\frac{4s}N\Big(\lambda^{\frac{4s}N}\|(-\Delta)^{\frac s2}\phi\|^2-\frac{B\lambda^{\frac{2sB}N}}{2p}\int_{\R^N}(I_\alpha*|\cdot|^b|\phi_n|^p)|x|^b|\phi_n|^p\,dx\Big);\\
\partial_\lambda(K(\phi^\lambda))_{|\lambda=1}=(\frac{4s}N)^2\Big(\|(-\Delta)^{\frac s2}\phi\|^2-\frac{B^2}{4p}\int_{\R^N}(I_\alpha*|\cdot|^b|\phi_n|^p)|x|^b|\phi_n|^p\,dx\Big);\\
\end{gather*}
Since $K(\phi)=0$, one gets
$$\partial_\lambda(K(\phi^\lambda))_{|\lambda=1}=(\frac{4s}N)^2\frac{B(2-B)}{4p}\int_{\R^N}(I_\alpha*|\cdot|^b|\phi_n|^p)|x|^b|\phi_n|^p\,dx.$$
Thus, $\eta =0$ and $S'(\phi) = 0.$ So, $\phi$ is a ground state.
\end{proof}
\section{Gagliardo-Nirenberg inequality}
This section, one establishes a sharp Gagliardo-Nirenberg type inequality related to the Choquard problem \eqref{S}. The proof follows by \cite{cw} ideas. 
Let us prove \eqref{euler}. Thanks to a Schwartz symmetrization argument, take a minimizing sequence 
$$\beta:=\frac1{C(N,p,b,\alpha,s)}=\lim_nJ(v_n),\quad v_n\in H^s_{rd}.$$
Define the scaling $u^{\lambda,\mu}:=\lambda u(\mu .)$, $\lambda,\mu\in\R$. Then,
\begin{gather*}
\|(-\Delta)^{\frac s2} u^{\lambda,\mu}\|^2=\lambda^2\mu^{2s-N}\|(-\Delta)^{\frac s2} u\|^2;\\
\|u^{\lambda,\mu}\|^2=\lambda^2\mu^{-N}\|u\|^2;\\
\int_{\R^N}(I_\alpha*|\cdot|^b|u^{\lambda,\mu}|^p)|x|^b|u^{\lambda,\mu}|^p\,dx=\lambda^{2p}\mu^{-N-\alpha-2b}\int_{\R^N}(I_\alpha*|\cdot|^b|u|^p)|x|^b|u|^p\,dx.
\end{gather*}
It follows that 
$$J(u^{\lambda,\mu})=J(u).$$
Take $\psi_n:=v_n^{\lambda_n,\mu_n}$, where 
$$\mu_n:=\Big(\frac{\|v_n\|}{\|(-\Delta)^{\frac s2}  v_n\|}\Big)^\frac1s\quad\mbox{and}\quad \lambda_n:=\frac{\|v_n\|^{\frac N{2s}-1}}{{\|(-\Delta)^{\frac s2}  v_n\|}^\frac N{2s}}.$$
So,
$$\|\psi_n\|=\|(-\Delta)^{\frac s2} \psi_n\|=1\quad\mbox{and}\quad \beta=\lim_nJ(\psi_n).$$
Thanks to Lemma \ref{hls} and Sobolev embedding, 
\begin{eqnarray*}
(A_n)
&:=&\int_{\R^N}|x|^b|(I_\alpha*|\cdot|^b|\psi_n|^{p})|\psi_n|^{p}-(I_\alpha*|\cdot|^b|\psi|^{p})|\psi|^{p}|\,dx\\
&\leq&\int_{\R^N}|x|^b|(I_\alpha*[|\cdot|^b(|\psi_n|^{p}-|\psi|^{p})])|\psi|^{p}-(I_\alpha*|\cdot|^b|\psi_n|^{p})[|\psi|^{p}-|\psi_n|^{p}]|\,dx\\
&\lesssim&(\||x|^b\psi^p\|_{L^{\frac{2N}{\alpha+N}}}+\||x|^b\psi_n^p\|_{L^{\frac{2N}{\alpha+N}}})\||x|^b(|\psi|^p-|\psi_n|^p)\|_{L^{\frac{2N}{\alpha+N}}}.
\end{eqnarray*}
Take $\rho:=(\frac N{|b|})^-$ and $r:=\frac{2N}{\alpha+N-\frac{2N}\rho}$. Because $\tilde p<p<p^*$, it follows that $2<rp<\frac{2N}{N-2s}$. So, by compact Sobolev injections via H\"older inequality, one gets
\begin{eqnarray*}
\||x|^b(|\psi|^p-|\psi_n|^p)\|_{L^{\frac{2N}{\alpha+N}}(|x|<1)}
&\leq&\||x|^b\|_{L^\rho(|x|<1)}\||\psi|^p-|\psi_n|^p\|_{L^r(|x|<1)}\\
&\lesssim&\||\psi|^p-|\psi_n|^p\|_{L^r(|x|<1)}\\
&\lesssim&\|[|\psi_n|-|\psi|]\sum_{k=1}^{p-1}|\psi_n|^k|\psi|^{p-k-1}\|_{L^r}\\
&\lesssim&\|\psi_n-\psi\|_{pr}\sum_{k=1}^{p-1}\|\psi_n\|_{pr}^k\|\psi\|_{pr}^{p-k-1}\to0.
\end{eqnarray*}
Similarly, one estimates the previous integrals on $\{|x|>1\}$. Then, 
$$\lim_n(A_n)=0.$$
Thus,
$$J(\psi_n)=\frac1{\int_{\R^N}(I_\alpha*|\cdot|^b|\psi_n|^{p})|x|^b|\psi_n|^{p}\,dx}\rightarrow\frac1{\int_{\R^N}(I_\alpha*|\cdot|^b|\psi|^{p})|x|^b|\psi|^{p}\,dx}.$$
Thanks to the lower semi continuity of $\|\cdot\|_{H^s}$, 
$$\|\psi\|\leq1\quad\mbox{and}\quad \|(-\Delta)^{\frac s2}\psi\|\leq1.$$
So, $J(\psi)< \beta$ if $\|\psi\|\|(-\Delta)^{\frac s2} \psi\|<1$, which gives
$$\|\psi\|=1\quad\mbox{and}\quad \|(-\Delta)^{\frac s2}\psi\|=1.$$
Then,
\begin{gather*}
\psi_n\rightarrow\psi\quad\mbox{in}\quad H^s;\\
\beta=J(\psi)=\frac1{\int_{\R^N}(I_\alpha*|\cdot|^b|\psi|^{p})|x|^b|\psi|^{p}\,dx}.
\end{gather*}
Finally, $\psi$ satisfies \eqref{euler} because the minimizer satisfies the Euler equation
$$\partial_\varepsilon J(\psi+\varepsilon\eta)_{|\varepsilon=0}=0,\quad\forall \eta\in C_0^\infty\cap H^s.$$
Eventually, one proves \eqref{part3}. Let $\psi$ satisfying \eqref{euler} and 
$$C(N,p,b,\alpha,s)=\frac1\beta=\int_{\R^N}(I_\alpha*|\cdot|^b|\psi|^{p})|x|^b|\psi|^{p}\,dx.$$ 
The scaled function
$$\psi=\phi^{\lambda,\mu}:=\lambda\phi(\mu.),\quad \mu=\Big(\frac AB\Big)^\frac1{2s}\quad\mbox{and}\quad \lambda=\Big((\frac{A}B)^\frac{\alpha+2b}{2s}\frac A{2p\beta}\Big)^\frac1{2(p-1)},$$
satisfies
$$(-\Delta)^s\phi+\phi-(I_\alpha*|\cdot|^b|\phi|^p)|x|^b|\phi|^{p-2}\phi=0.$$
Thus, the equalities
$$\|\psi\|=1=\lambda\mu^{-\frac N2}\|\phi\|,$$
give
$$\beta=\frac{A}{2p}(\frac AB)^{-\frac B2}\|\phi\|^{2(p-1)}.$$
This finishes the proof.
\section{Existence of solutions}
This section is devoted to prove the existence and uniqueness of energy solutions to the non-linear Schr\"odinger problem \eqref{S}. A standard fixed point argument is used. Take $u,v$ in the space
$$ B_T(R):=\{w\in \cap_{(q,r)\in\Gamma} L_T^q(W^{s,r})\quad\mbox{s.\,t}\quad \sup_{(q,r)\in\Gamma}\|w\|_{L_T^q(W^{s,r})}\leq R\}, $$ 
equipped with the complete distance
$$d(u,v):=\|u\|_{S_T(\R^N)}:=\sup_{(q,r)\in\Gamma}\|u-v\|_{L_T^q(L^r(\R^N))}.$$
Define the function
$$\phi(u):=e^{-i.(-\Delta)^s}u_0-\int_0^.e^{-i(.-\tau)(-\Delta)^s}[(I_\alpha*|\cdot|^b|u|^p)|x|^b|u|^{p-2}u]\,d\tau.$$
Using Strichartz estimate and Corollary \ref{cor}, one has
\begin{eqnarray*}
d(\phi(u),\phi(v))
&\lesssim& \|(I_\alpha*[|\cdot|^b|u|^p-|\cdot|^b|v|^p)]|x|^b|u|^{p-2}u\|_{L^{q'}_T(L^{r'}(|x|<1))}\\
&+&\|(I_\alpha*|\cdot|^b|v|^p)(|x|^b|u|^{p-2}u-|x|^b|v|^{p-2}v)\|_{L^{q'}_T(L^{r'}(|x|<1))}\\
&+& \|(I_\alpha*[|\cdot|^b|u|^p-|\cdot|^b|v|^p)]|x|^b|u|^{p-2}u\|_{L^{q'_1}_T(L^{r_1'}(|x|>1))}\\
&+&\|(I_\alpha*|\cdot|^b|v|^p)(|x|^b|u|^{p-2}u-|x|^b|v|^{p-2}v)\|_{L^{q'_1}_T(L^{r'_1}(|x|>1))}\\
&\lesssim&(I)+(II)+(III)+(IV),
\end{eqnarray*}
where $(q,r),(q_1,r_1)\in\Gamma$. Take $\mu:=(\frac N{-b})^-$ and $r:=\frac{2Np}{\alpha+N-\frac{2N}\mu}$. Then, $1+\frac\alpha N=\frac2\mu+\frac{2p}{r}$ and using H\"older and Hardy-Littlewood-Paley inequalities, one gets
\begin{eqnarray*}
(II)
&=&\|(I_\alpha*|\cdot|^b|v|^p)(|x|^b|u|^{p-2}u-|x|^b|v|^{p-2}v)\|_{L^{q'}_T(L^{r'}(|x|<1))}\\\\
&\lesssim&\|(I_\alpha*|\cdot|^b|v|^p)|x|^b(|u|^{p-2}+|v|^{p-2})(u-v)\|_{L^{q'}_T(L^{r'}(|x|<1))}\\
&\lesssim&\||x|^b\|_{L^\mu(|x|<1)}^2\|(\|u\|_{r}^{2(p-1)}+\|v\|_{r}^{2(p-1)})\|u-v\|_{r}\|_{L^{q'}(0,T)}\\
&\lesssim&\|(\|u\|_{r}^{2(p-1)}+\|v\|_{r}^{2(p-1)})\|u-v\|_{r}\|_{L^{q'}(0,T)}.
\end{eqnarray*}
Because $p<p_*$, there exists $\delta>0$ such that  $\frac1{q'}=\frac{2p-1}q+\frac1\delta$. Then, taking account of Sobolev embeddings and H\"older inequality, one obtains
\begin{eqnarray*}
(II)
&\lesssim&T^{\frac1\delta}\Big(\|u\|_{L_T^q(L^r)}^{2(p-1)}+\|v\|_{L_T^q(L^r)}^{2(p-1)}\Big)d(u,v)\\
&\lesssim&T^{\frac1\delta}R^{2(p-1)}d(u,v).
\end{eqnarray*}
Similarly, one estimates $(I)$. Taking in the previous computation, $\mu:=(\frac N{-b})^+$, one controls integrals on $\{|x|>1\}$ and gets 
$$(III)+(IV)\lesssim T^{\frac1\delta}R^{2(p-1)}d(u,v).$$ 
Thus,
$$d(\phi(u),\phi(v))\lesssim T^{\frac1\delta}R^{2(p-1)}d(u,v).$$
Moreover, taking $v=0$ in the previous estimate, one gets
$$\|\phi(u)\|_{S_T(\R^N)}\leq C\|u_0\|+C T^{\frac1\delta} R^{2p-1}.$$
It remains to estimate $\|(-\Delta)^{\frac s2}[\phi(u)]\|_{S_T(\R^N)}$. Taking account of Fourrier transform, one can check that
$$(-\Delta)^{\frac s2}|x|^b=C_{N,b}|x|^{b-s}.$$
Using the chain rules in Lemma \ref{chain} via Corollary \ref{cor}, 
\begin{eqnarray*}
(A)
&:=&\|(-\Delta)^{\frac s2}[\phi(u)-e^{-it(-\Delta)^s}u_0]\|_{L_T^{q'}(L^{r'})}\\
&\lesssim&\|(I_\alpha *(-\Delta)^\frac s2(|\cdot|^b|u|^p))|x|^b|u|^{p-2}u+(I_\alpha *|\cdot|^b|u|^p)(-\Delta)^\frac s2(|x|^b|u|^{p-2}u)\|_{L^{q'}_T(L^{r'})}\\
&\lesssim&\|(I_\alpha *[|\cdot|^b(-\Delta)^\frac s2(|u|^p)])|x|^b|u|^{p-2}u+(I_\alpha *|\cdot|^b|u|^p)|x|^b(-\Delta)^\frac s2(|u|^{p-2}u)\|_{L^{q'}_T(L^{r'})}\\
&+&\|(I_\alpha *(|\cdot|^{b-s}|u|^p))|x|^b|u|^{p-2}u+(I_\alpha *|\cdot|^b|u|^p)(|x|^{b-s}|u|^{p-2}u)\|_{L^{q'}_T(L^{r'})}.
\end{eqnarray*}
Thanks to the Chain rule in Lemma \ref{chain} and arguing as previously, one gets
\begin{eqnarray*}
(I_1)+(II_1)
&:=&\|(I_\alpha *[|\cdot|^b(-\Delta)^\frac s2(|u|^p)])|x|^b|u|^{p-2}u\|_{L^{q'}_T(L^{r'}(|x|<1))}\\
&+&\|(I_\alpha *|\cdot|^b|u|^p)|x|^b(-\Delta)^\frac s2(|u|^{p-2}u)\|_{L^{q'}_T(L^{r'}(|x|<1))}\\
&\lesssim& \|\||x|^b\|_{L^\mu(|x|<1)}^2\Big[\|(-\Delta)^\frac s2(|u|^p)\|_{\frac rp}\|u\|_r^{p-1}+\|u\|_r^p\|(-\Delta)^\frac s2(|u|^{p-2}u)\|_{\frac r{p-1}}\Big]\|_{L^{q'}_T}\\
&\lesssim& \|\|u\|_r^{2p-2}\|u\|_{\dot W^{s,r}}\|_{L_T^{q'}}\\
&\lesssim& \|u\|_{L_T^\infty(L^r)}^{2p-2}\|u\|_{L_T^{q'}(\dot W^{s,r})}\\
&\lesssim& T^{1-\frac2q}\|u\|_{L_T^\infty(H^s)}^{2p-2}\|u\|_{L_T^{q}(\dot W^{s,r})}\\
&\lesssim&T^{1-\frac2q}R^{2p-1}.
\end{eqnarray*}
Take the choice $\rho:=(\frac N{s-b})^-$, $r_1:=\frac{2Np}{N+\alpha+2b+2s(p-1)-\epsilon}$ and $a:=(\frac N{-b})^-$, one gets
$$1+\frac\alpha N=\frac{1}{r_1}+\frac1{\rho}+\frac1a+(2p-1)(\frac{1}{r_1}-\frac sN).$$
Since $p<p^*$, it follows that $(2p-1)q_1'<q_1$ and there exists a positive number denoted also $\delta>0$ such that $\frac1{q_1'}=\frac{2p-1}{q_1}+\frac1\delta$. Then, taking account of Hardy-Littlewood-Paley and H\"older inequalities and to the Chain rule in Lemma \ref{chain}, one gets
\begin{eqnarray*}
(III_1)+(IV_1)
&:=&\|(I_\alpha *(|\cdot|^{b-s}|u|^p))|x|^b|u|^{p-2}u\|_{L^{q'}_T(L^{r'}(|x|<1))}+\|(I_\alpha *|\cdot|^b|u|^p)(|x|^{b-s}|u|^{p-2}u)\|_{L^{q'}_T(L^{r'}(|x|<1))}\\
&\lesssim&\|\||x|^b\|_{L^a(|x|<1)}\||x|^{b-s}\|_{L^\rho(|x|<1)}\|u\|_{\frac{Nr_1}{N-sr_1}}^{2p-1}\|_{L^{q_1'}(0,T)}\\
&\lesssim&\|u\|_{L^{(2p-1)q'_1}_T(L^\frac{Nr_1}{N-sr_1})}^{2p-1}\\
&\lesssim&T^{\frac1\delta}\|u\|_{L_T^{q_1}(L^\frac{Nr_1}{N-sr_1})}^{2p-1}.
\end{eqnarray*}
The conditions $N+\alpha+2b-2s>0$ and $p>\bar p$ imply that $2<r_1<N$. So by Sobolev injections, one gets
$$(III_1)+(IV_1)\lesssim T^{\frac1\delta}\|u\|_{L_T^{q_1}(W^{s,r_1})}^{2p-1}\lesssim T^{\frac1\delta}R^{2p-1}.$$
Similarly, one estimates the integrals on $\{|x|>1\}$. Taking $R>C\|u_0\|_{H^s}$, it follows that $\phi$ is a contraction of $B_T(R)$ for some $T>0$ small enough. The fix point is a solution to \eqref{S}. The uniqueness is a consequence of the previous computations via a translation argument. 
\section{Variance type identity}
This section is devoted to prove Theorem \ref{viril}. By taking the time derivative and using \eqref{S}, one gets
$$\frac d{dt} M_\psi [u(t)] = <u(t),[(-\Delta)^s ,i\Gamma_\psi ]u(t)> + <u(t),[-(I_\alpha*|\cdot|^b|u|^p)|x|^b|u|^{p-2} ,i\Gamma_\psi ]u(t)>,$$
where $[X, Y ]:= XY -Y X$ denotes the commutator of $X$ and $Y$. According to computation done in \cite{bhl}, one have
$$<u(t),[(-\Delta)^s ,i\Gamma_\psi ]u(t)>\leq 4s\|(-\Delta)^{\frac s2}u(t)\|^2+R^{-2s}.$$
Let us treat the nonlinear term
\begin{eqnarray*}
(N)
&:=&<u(t),[-(I_\alpha*|\cdot|^b|u|^p)|x|^b|u|^{p-2} ,i\Gamma_\psi ]u(t)>\\
&=&-<u(t),(I_\alpha*|\cdot|^b|u|^p)|x|^b|u|^{p-2}\nabla\psi_R\nabla u>-<u(t),(I_\alpha*|\cdot|^b|u|^p)|x|^b|u|^{p-2}\nabla.(u\nabla{}{\psi_R})>\\
&+&<u(t),\nabla\psi_R\nabla[(I_\alpha*|\cdot|^b|u|^p)|x|^b|u|^{p-2}u]>+<u(t),\nabla.[\nabla\psi_R(I_\alpha*|\cdot|^b|u|^p)|x|^b|u|^{p-2}u(t){}]>\\
&=&-2<u(t),(I_\alpha*|\cdot|^b|u|^p)|x|^b|u|^{p-2}\nabla\psi_R\nabla u>+2<u(t),\nabla\psi_R\nabla[(I_\alpha*|\cdot|^b|u|^p)|x|^b|u|^{p-2}u]>\\
&=&2\int_{\R^N}|u|^2\nabla\psi_R\nabla[(I_\alpha*|\cdot|^b|u|^p)|x|^b|u|^{p-2}]\,dx.
\end{eqnarray*}
By integration by parts
\begin{eqnarray*}
(N)
&=&2\int_{\R^N}|u|^2\nabla\psi_R\nabla[(I_\alpha*|\cdot|^b|u|^p)|x|^b|u|^{p-2}]\,dx\\
&=&-2\int_{\R^N}\nabla(|u|^2)\nabla\psi_R(I_\alpha*|\cdot|^b|u|^p)|x|^b|u|^{p-2}\,dx-2\int_{\R^N}\Delta\psi_R(I_\alpha*|\cdot|^b|u|^p)|x|^b|u|^{p}\,dx\\
&=&-\frac4p\int_{\R^N}(I_\alpha*|\cdot|^b|u|^p)|x|^b\nabla\psi_R\nabla(|u|^{p})\,dx-2\int_{\R^N}\Delta\psi_R(I_\alpha*|\cdot|^b|u|^p)|x|^b|u|^{p}\,dx\\
&=&\frac4p\int_{\R^N}\nabla(I_\alpha*|\cdot|^b|u|^p)\nabla\psi_R|x|^b|u|^{p}\,dx+2(\frac2p-1)\int_{\R^N}\Delta\psi_R(I_\alpha*|\cdot|^b|u|^p)|x|^b|u|^{p}\,dx\\
&+&\frac{4b}p\int_{\R^N}(I_\alpha*|\cdot|^b|u|^p)x.\nabla\psi_R|x|^{b-2}|u|^{p}\,dx.
\end{eqnarray*}
Thanks to the properties of $\psi$, it follows that
\begin{eqnarray*}
(L)
&:=&\int_{\R^N}(I_\alpha*|\cdot|^b|u|^p)x.\nabla\psi_R|x|^{b-2}|u|^{p}\,dx\\
&=&\int_{|x|<R}(I_\alpha*|\cdot|^b|u|^p)|x|^{b}|u|^{p}\,dx+O\Big(\int_{|x|>R}(I_\alpha*|\cdot|^b|u|^p)|x|^{b}|u|^{p}\,dx\Big)\\
&=&\int_{\R^N}(I_\alpha*|\cdot|^b|u|^p)|x|^{b}|u|^{p}\,dx+O\Big(\int_{|x|>R}(I_\alpha*|\cdot|^b|u|^p)|x|^{b}|u|^{p}\,dx\Big).
\end{eqnarray*}
Using the symmetry of $I_\alpha$, yield
\begin{eqnarray*}
(M)
&:=&\frac4p\int_{\R^N}(\nabla I_\alpha*|\cdot|^b|u|^p)\nabla\psi_R|x|^b|u|^{p}\,dx\\
&=&-2\frac{N-\alpha}p\mathcal K\int_{\R^N}(\nabla\psi_R(x)-\nabla\psi_R(y))\frac{x-y}{|x-y|^{N-\alpha+2}}|y|^b|u(y)|^p|x|^b|u(x)|^{p}\,dy\,dx.
\end{eqnarray*}
Take the sets
\begin{gather*}
\Omega:=\{(x,y)\in\R^N\times\R^N,\quad\mbox{s.\,t}\quad R<|x|<2R\quad\mbox{or}\quad R<|y|<2R\};\\
\Omega':=\{(x,y)\in\R^N\times\R^N,\quad\mbox{s.\,t}\quad |x|>2R,\, |y|<R\quad\mbox{or}\quad |x|<R,\,|y|>2R\}.
\end{gather*}
Then, by the properties of $\psi$, it follows that
\begin{eqnarray*}
-\frac{p}{2(N-\alpha)}(M)
&=&\mathcal K\int_{\{|x|<R,\,|y|<R\}}\frac{|y|^b|u(y)|^p|x|^b|u(x)|^{p}}{|x-y|^{N-\alpha}}\,dy\,dx\\
&+&\mathcal K\int_{\Omega\cup\Omega'}(\nabla\psi_R(x)-\nabla\psi_R(y))\frac{x-y}{|x-y|^{N-\alpha+2}}|y|^b|u(y)|^p|x|^b|u(x)|^{p}\,dy\,dx\\
&=&\mathcal K\int_{\R^N\times\R^N}\frac{|y|^b|u(y)|^p|x|^b|u(x)|^{p}}{|x-y|^{N-\alpha}}\,dy\,dx\\
&+&{O\Big(}\int_{\{|x|>R\}\times\R^N}\frac{|y|^b|u(y)|^p|x|^b|u(x)|^{p}}{|x-y|^{N-\alpha}}\,dy\,dx+\int_{\R^N\times\{|y|>R\}}\frac{|y|^b|u(y)|^p|x|^b|u(x)|^{p}}{|x-y|^{N-\alpha}}\,dy\,dx\Big)\\
&+&O\Big(\int_{\{|x|>R\}\times\{|x-y|>\frac R2\}}(\nabla\psi_R(x)-\nabla\psi_R(y))\frac{x-y}{|x-y|^{N-\alpha+2}}|y|^b|u(y)|^p|x|^b|u(x)|^{p}\,dy\,dx\Big)\\
&+&O\Big(\int_{\{|x|>R\}\times\{|x-y|<\frac R2\}}(\nabla\psi_R(x)-\nabla\psi_R(y))\frac{x-y}{|x-y|^{N-\alpha+2}}|y|^b|u(y)|^p|x|^b|u(x)|^{p}\,dy\,dx\Big)\\
&=&\int_{\R^N}(I_\alpha*|\cdot|^b|u|^p)|x|^b|u(x)|^{p}\,dx+O\Big(\int_{\{|x|>R\}}(I_\alpha*|\cdot|^b|u|^p)|x|^b|u|^{p}\,dx\Big).
\end{eqnarray*}
Regrouping previous computations, and using that $\Delta\psi_R (r)=N$ for $r\leq R$,
\begin{eqnarray*}
(N)
&=&-\frac{2(N-\alpha)}{p}\int_{\R^N}(I_\alpha*|\cdot|^b|u|^p)|x|^b|u(x)|^{p}\,dx+O\Big(\int_{\{|x|>R\}}(I_\alpha*|\cdot|^b|u|^p)|x|^b|u(x)|^{p}\,dx\Big)\\
&+&2(\frac2p-1)\int_{\R^N}\Delta\psi_R(I_\alpha*|\cdot|^b|u|^p)|x|^b|u(x)|^{p}\,dx+\frac{4b}p(L)\\
&=&-2\Big(\frac{N-\alpha}{p}+N(1-\frac2p)\Big)\int_{\R^N}(I_\alpha*|\cdot|^b|u|^p)|x|^b|u(x)|^{p}\,dx+O\Big(\int_{\{|x|>R\}}(I_\alpha*|u|^p)|u|^{p}\,dx\Big)\\
&+&2(\frac2p-1)\int_{|x|>R}(\Delta\psi_R-N)(I_\alpha*|\cdot|^b|u|^p)|x|^b|u(x)|^{p}\,dx+\frac{4b}p\int_{\R^N}(I_\alpha*|\cdot|^b|u|^p)|x|^b|u(x)|^{p}\,dx\\
&=&-2\frac{Np-N-\alpha-2b}{p}\int_{\R^N}(I_\alpha*|\cdot|^b|u|^p)|x|^b|u(x)|^{p}\,dx+O\Big(\int_{\{|x|>R\}}(I_\alpha*|\cdot|^b|u|^p)|x|^b|u(x)|^{p}\,dx\Big)\\
&=&-\frac{2sB}{p}\int_{\R^N}(I_\alpha*|\cdot|^b|u|^p)|x|^b|u(x)|^{p}\,dx+O\Big(\int_{\{|x|>R\}}(I_\alpha*|\cdot|^b|u|^p)|x|^b|u(x)|^{p}\,dx\Big).
\end{eqnarray*}
Taking account of Hardy-Littlewood-Sobolev inequality, 
\begin{eqnarray*}
(I)
&:=&\int_{\{|x|> R\}}(I_\alpha*|\cdot|^b|u|^p)|x|^b|u|^p\, dx\\
&\lesssim&\||\cdot|^bu^p\|_{L^\frac{2N}{\alpha+N}(|x|>R)}^2\\
&\lesssim&\Big(\||\cdot|^bu\|_{L^\infty(|x|> R)}^{p-1-\frac\alpha N}\|u\|^\frac{\alpha+N}N\Big)^2.
\end{eqnarray*}
For $\frac12<\mu:=\frac{1+\varepsilon}2<s<\frac N2$, by \eqref{frcs}, one gets
\begin{eqnarray*}
(I)
&\lesssim&\||\cdot|^bu\|_{L^\infty(|x|> R)}^{2(p-1-\frac\alpha N)}\\
&\lesssim&\Big(R^{-\frac N2+\mu+b}\|(-\Delta)^\frac\mu2 u\|\Big)^{2(p-1-\frac\alpha N)}\\
&\lesssim&\frac1{R^{(N-1-2b-\varepsilon)(p-1-\frac\alpha N)}}\|(-\Delta)^\frac\mu2 u\|^{2(p-1-\frac\alpha N)}\\
&\lesssim&\frac1{R^{(N-1-2b-\varepsilon)(p-1-\frac\alpha N)}}\Big(\|u\|^{1-\frac\mu s}\|(-\Delta)^\frac s2 u\|^\frac\mu s\Big)^{2(p-1-\frac\alpha N)}\\
&\lesssim&\frac1{R^{(N-1-2b-\varepsilon)(p-1-\frac\alpha N)}}\|(-\Delta)^\frac s2 u\|^{\frac{1+\varepsilon}{s}(p-1-\frac\alpha N)},
\end{eqnarray*}
{}{where we used \eqref{frcs} in the second inequality and an interpolation estimate in the fourth one}. In summary, one have has
\begin{eqnarray*}
\frac d{dt} M_{\psi_R} [u]
&=& <u,[(-\Delta)^s ,i\Gamma_\psi ]u> + <u,[-(I_\alpha*|\cdot|^b|u|^p)|x|^b|u|^{p-2} ,i\Gamma_\psi ]u>\\
&\leq& 4s\|(-\Delta)^{\frac s2}u\|^2+CR^{-2s}-\frac{2sB}{p}\int_{\R^N}(I_\alpha*|\cdot|^b|u|^p)|x|^b|u(x)|^{p}\,dx\\
&+&O\Big(\int_{\{|x|>R\}}(I_\alpha*|\cdot|^b|u|^p)|x|^b|u|^{p}\,dx\Big)\\
&\leq&2sBE-2s(B-2)\|(-\Delta)^{\frac s2}u\|^2+\frac C{R^{2s}}\\
&+&\frac C{R^{(N-1-\varepsilon-2b)(p-1-\frac\alpha N)}}\|(-\Delta)^\frac s2 u\|^{\frac{1+\varepsilon}{s}(p-1-\frac\alpha N)}.
\end{eqnarray*}
\section{Global/non-global existence of solutions}
In this section, a sharp criteria of finite time blow-up/global well-posedness is given. Let us start with an auxiliary result.
\begin{lem}\label{stbl}
The following conditions are invariant under the flow of \eqref{S},
\begin{enumerate}
\item[1.]
\eqref{ss} and \eqref{ss1};
\item[2]
\eqref{ss} and \eqref{ss2}.
\end{enumerate}
\end{lem}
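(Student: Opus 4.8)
The plan is to combine the two conservation laws from Theorem \ref{t0} with the sharp Gagliardo--Nirenberg inequality of Theorem \ref{gag}. First I would observe that $\mathcal{ME}$ is already constant along the flow: since $M(u(t))=M(u_0)$ and $E(u(t))=E(u_0)$ are conserved, the ratio $\mathcal{ME}(u(t))=\mathcal{ME}(u_0)$ does not move, so the hypothesis \eqref{ss} is automatically propagated. Thus the real content of the lemma is that the flow cannot change the sign of $\mathcal G(u(t))-1$; equivalently, that $\mathcal G(u(t))$ never reaches the critical value $1$.

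To see this, work in the focusing case $\epsilon=-1$ and insert \eqref{ineq} into the energy to obtain the lower bound
$$E(u)\geq\|(-\Delta)^{\frac s2}u\|^2-\frac{C(N,p,b,\alpha,s)}p\|u\|^A\|(-\Delta)^{\frac s2}u\|^B.$$
Freezing the mass via $\|u\|=\|u_0\|$, set $x:=\|(-\Delta)^{\frac s2}u(t)\|$ and define
$$f(x):=x^2-\frac{C(N,p,b,\alpha,s)}p\|u_0\|^A x^B,$$
so that $E(u_0)\geq f(x(t))$ for all $t$. Since $B>2$, the map $f$ vanishes at the origin, strictly increases to a unique interior maximum at some $x_*>0$, and strictly decreases thereafter. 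Using the explicit value of the sharp constant from \eqref{part3} together with the ground-state identity $K(\phi)=0$ (so that $\|(-\Delta)^{\frac s2}\phi\|^2=\frac B{2p}\int_{\R^N}(I_\alpha*|\cdot|^b|\phi|^p)|x|^b|\phi|^p\,dx$), I would identify $x_*$ with the rescaled ground-state gradient norm and check that the normalized barrier height $f(x_*)$, after multiplication by the correct power of the mass, equals exactly the denominator $E(\phi)^{s_c}M(\phi)^{s-s_c}$.

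This computation rewrites the energy inequality in the scale-invariant variables as $\mathcal{ME}(u)\geq h(\mathcal G(u))$, where $h$ is strictly increasing on $[0,1]$, strictly decreasing on $[1,\infty)$, and satisfies $h(1)=1$. The strict hypothesis $\mathcal{ME}(u_0)<1=h(1)$ then forbids $\mathcal G(u(t))=1$ for every $t\in[0,T^*)$. Since $u\in C_{T^*}(H^s)$ and the mass is constant, the map $t\mapsto\mathcal G(u(t))$ is continuous, so its image is connected and cannot straddle the excluded value $1$. Hence \eqref{ss1} propagates to $\mathcal G(u(t))<1$ and \eqref{ss2} propagates to $\mathcal G(u(t))>1$ on the whole lifespan, which is precisely the asserted invariance of items (1) and (2).

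The main obstacle is the bookkeeping in the middle step: one must verify that the maximum of $f$, expressed in the normalized quantities, sits exactly at $\mathcal G=1$ with value $\mathcal{ME}=1$. This hinges on matching the exponents $A=2p-B$, $B$ and $s_c$ against the sharp constant \eqref{part3} and the Pohozaev/Nehari relations for $\phi$; once this alignment is confirmed, the continuity-plus-connectedness conclusion is routine.
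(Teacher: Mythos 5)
Your proposal is correct and follows essentially the same route as the paper: conservation of mass and energy plus the sharp Gagliardo--Nirenberg inequality \eqref{ineq} give the barrier inequality $f\big(\|(-\Delta)^{\frac s2}u(t)\|^2\big)\leq E(u_0)$, the location and height of the maximum of $f$ are identified with the ground-state quantities via \eqref{part3} and the Pohozaev relations, and continuity of $t\mapsto\|(-\Delta)^{\frac s2}u(t)\|$ prevents crossing the barrier. Your formulation $\mathcal{ME}(u)\geq h(\mathcal G(u))$ with $h(1)=1$ is just a scale-invariant rephrasing of the paper's comparison $f(X(t))\leq E(u_0)<f(x_1)$, not a different argument.
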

\begin{proof}
Using the conservation laws via the sharp Gagliardo-Nirenberg inequality \eqref{ineq}, one have
\begin{eqnarray*}\label{xxx}
 E
&= &\|(-\Delta)^{\frac s2} u\|^2 - \frac1p \int_{\R^N}(I_\alpha*|\cdot|^b|u|^p)|x|^b|u|^p\,dx\\ 
&\geq& \|(-\Delta)^{\frac s2} u\|^2 - \frac{C_{N,p,b,\alpha,s}}{p}\|u_0\|^A\|(-\Delta)^{\frac s2} u\|^B.
\end{eqnarray*}
Define the quantities
$$X(t):=\|(-\Delta)^{\frac s2}u(t)\|^2 \quad\mbox{and}\quad\mathcal D:=\frac{C_{N,p,b,\alpha,s}}{p}{\|u_0\|}^A.$$
Then, 
\begin{equation}\label{eqq}
X - \mathcal DX^\frac B2\leq E, \quad \mbox{on}\quad[0,T^*).\end{equation}
The real function defined on $\R^+$ by $f(x):= x - \mathcal Dx^\frac B2,$ has a local maximum at
$$x_1:=\Big( \frac{2}{\mathcal D B}\Big)^{\frac{2}{B-2}}$$
with a maximum value
$$f(x_1)=\Big( \frac{2}{\mathcal D B}\Big)^{\frac{2}{B-2}}\Big( 1-\frac{2}{B}\Big) .$$
Taking account of Pohozaev identities, 
$$\|(-\Delta)^{\frac s2}\phi\|^2=\frac{B}{A} \|\phi\|^2\quad\mbox{and}\quad \int_{\R^N}(I_\alpha*|\cdot|^b |\phi|^p)|x|^b|\phi|^p\,dx =\frac{2p}B\|(-\Delta)^{\frac s2}\phi\|^2.$$
Then,
$$E(\phi) =\frac{B-2}{B}\|(-\Delta)^{\frac s2}\phi\|^2= \frac{B-2}{A}\|\phi\|^2.$$
Using the previous relation, the condition \eqref{ss} is equivalent to
\begin{equation}\label{x}
E(u_0)< \frac{B-2}{A}M(\phi)^{\frac s{s_c}}{}{M(u_0)}^{\frac{s_c-s}{s_c}}.
\end{equation}
Moreover, by \eqref{part3},
\begin{eqnarray}
f(x_1)
&=&\Big(\frac2{B\mathcal D}\Big)^\frac2{B-2}\Big(1-\frac2B\Big)\nonumber\\
&=&\Big(\frac{2p}{{}{C_{N,p,b,\alpha,s}}{}{M(u_0)}^{\frac A2}B}\Big)^\frac2{B-2}\Big(1-\frac2B\Big)\nonumber\\
&=&\Big((\frac{A}{B})^{1-\frac B2}(M(\phi))^{p-1}({}{M(u_0)})^{-\frac A2}\Big)^\frac2{B-2}\Big(1-\frac2B\Big)\nonumber\\
&=&\frac{B-2}A\Big((M(\phi))^{p-1}({}{M(u_0)})^{-\frac A2}\Big)^\frac2{B-2}\nonumber\\
&=& \frac{B-2}{A}\big(M(\phi)\big)^{\frac s{s_c}}\big({}{M(u_0)}\big)^{\frac{s_c-s}{s_c}}.\label{xx}
\end{eqnarray}
The relations \eqref{x} and \eqref{xx} imply that
$${}{E(u_0)}<f(x_1),$$
by the previous inequality and \eqref{eqq}, one has
\begin{equation}\label{ss3} f(\|(-\Delta)^{\frac s2}u(t)\|^2) \leq {}{E(u_0)}<f(x_1). \end{equation}
Next, taking account of the previous computations, 
$$x_1=\frac BA\big(M(\phi)\big)^{\frac s{s_c}}\big(M(u_0)\big)^{\frac{s_c-s}{s_c}}.$$
\begin{enumerate}
\item[1.]
The condition \eqref{ss1} is equivalent to 
\begin{eqnarray*}
\|(-\Delta)^{\frac s2}u_0\|^2
&<&\|(-\Delta)^{\frac s2}\phi\|^2\Big(\frac{M(\phi)}{{}{M(u_0)}}\Big)^\frac{s-s_c}{s_c}\\
&<&\frac BAM(\phi)\Big(\frac{M(\phi)}{{}{M(u_0)}}\Big)^\frac{s-s_c}{s_c}\\
&<&x_1.
\end{eqnarray*}
Then by \eqref{ss3} and the continuity of $t\to \|(-\Delta)^{\frac s2}u(t)\|$, one gets
$$\|(-\Delta)^{\frac s2}u(t)\|^2<x_1 $$
for all time $t\in [0, T^*)$ which gives \eqref{ss1}. Thus, the conditions \eqref{ss} and \eqref{ss1} are invariant under the flow of \eqref{S}.
\item[2.]
The condition \eqref{ss2} is equivalent to 
\begin{eqnarray*}
\|(-\Delta)^{\frac s2}u_0\|^2
&>&\|(-\Delta)^{\frac s2}\phi\|^2\Big(\frac{M(\phi)}{{}{M(u_0)}}\Big)^\frac{s-s_c}{s_c}\\
&>&\frac BAM(\phi)\Big(\frac{M(\phi)}{{}{M(u_0)}}\Big)^\frac{s-s_c}{s_c}\\
&>&x_1.
\end{eqnarray*}
Then by \eqref{ss3} and the continuity of $t\to \|(-\Delta)^{\frac s2}u(t)\|$, one gets
$$\|(-\Delta)^{\frac s2}u(t)\|^2>x_1\quad\mbox{for all time}\quad t\in [0, T^*) $$
Thus, the conditions \eqref{ss} and \eqref{ss2} are invariant under the flow of \eqref{S}.
\end{enumerate}
\end{proof}
\subsection{Global well-posedness}
The first part of Theorem \ref{Blow-up} is a direct consequence of {Lemma \ref{stbl}. Indeed, in such a case, $\sup_{t\in[0,T^*)}\|u(t)\|_{H^s}<\infty$}.
\subsection{Blow-up}
Let us give an intermediate result which follows as in \cite{st}.
\begin{lem}\label{ode}
Assume that $\frac12<s<1$, $E(u_0)\neq0$ and there exist $t_0>0$ and $\delta>0$ such that
$$M_{\psi_R}[u(t)]\leq-\delta\int_{t_0}^t\|(-\Delta)^{\frac s2}u(\tau)\|^2\,d\tau,\quad\forall t\geq t_0.$$
Then, $T^*<\infty$.
\end{lem}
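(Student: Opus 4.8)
The plan is to reduce the statement to a scalar Riccati inequality for
$$g(t):=\int_{t_0}^t\|(-\Delta)^{\frac s2}u(\tau)\|^2\,d\tau,$$
and then conclude by the standard ODE blow-up mechanism. First I would record the elementary properties of $g$: it is non-negative and non-decreasing, $g(t_0)=0$ and $g'(t)=\|(-\Delta)^{\frac s2}u(t)\|^2$. Moreover $g(t)>0$ for every $t>t_0$, since the mass is conserved by Theorem \ref{t0} and $E(u_0)\neq0$ forces $u(t)\not\equiv0$, hence $\|(-\Delta)^{\frac s2}u(t)\|>0$ for all $t$ and the time integral is strictly positive. With this notation the hypothesis reads $-M_{\psi_R}[u(t)]\geq\delta\,g(t)>0$ on $[t_0,T^*)$.

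The key analytic input I would establish next is an a priori upper bound for the localized variance in terms of the fractional kinetic energy, of the form
$$|M_{\psi_R}[u(t)]|\leq C\,\|u_0\|^{2-\frac1s}\,\|(-\Delta)^{\frac s2}u(t)\|^{\frac1s},\qquad C=C(R,N,s).$$
Writing $M_{\psi_R}[u]=\langle u,\Gamma_{\psi_R}u\rangle$ and using $\psi_R'(r)\leq r$, $\Delta\psi_R\leq N$, the Cauchy--Schwarz inequality, mass conservation, and a fractional interpolation/commutator estimate in the spirit of \cite{bhl} (together with the radial pointwise control \eqref{frcs} of Lemma \ref{sblv}), one trades the first-order object $M_{\psi_R}[u]$ for the fractional norm at the cost of the power $\tfrac1s$. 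I expect this estimate to be the \emph{main obstacle}: $\Gamma_{\psi_R}$ is a first-order operator while $u$ only lies in $H^s$ with $s<1$, so the gradient cannot be estimated directly and must be exchanged for $(-\Delta)^{\frac s2}u$, which is precisely where the restriction $s>\frac12$ becomes decisive.

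Combining the hypothesis with this bound gives
$$\delta\,g(t)\leq|M_{\psi_R}[u(t)]|\leq C\,\|u_0\|^{2-\frac1s}\big(g'(t)\big)^{\frac1{2s}},$$
hence $g'(t)\geq c_0\,g(t)^{2s}$ on $(t_0,T^*)$ for some constant $c_0>0$. Finally I would integrate this Riccati inequality. Since $s>\frac12$ we have $2s>1$, so
$$\frac{d}{dt}\big(g(t)^{1-2s}\big)=(1-2s)\,g(t)^{-2s}g'(t)\leq-(2s-1)c_0,$$
and integrating from a fixed $t_1>t_0$ (where $g(t_1)>0$) yields
$$0<g(t)^{1-2s}\leq g(t_1)^{1-2s}-(2s-1)c_0\,(t-t_1).$$
The right-hand side becomes negative at the finite time $t_1+\frac{g(t_1)^{1-2s}}{(2s-1)c_0}$, which is impossible while the solution exists; therefore $T^*\leq t_1+\frac{g(t_1)^{1-2s}}{(2s-1)c_0}<\infty$. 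The delicate point is the virial bound of the second paragraph, the remaining steps being the routine super-linear Riccati blow-up, for which the assumption $s>\frac12$ is exactly what guarantees the exponent $2s>1$.
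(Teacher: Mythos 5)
Your overall skeleton --- reduce everything to $g(t):=\int_{t_0}^t\|(-\Delta)^{\frac s2}u(\tau)\|^2\,d\tau$, bound the localized virial by a power of $g'$, and run the super-linear Riccati argument using $2s>1$ --- is exactly the mechanism behind the proof the paper points to (the paper gives no proof of its own; it defers to \cite{st}, which follows \cite{bhl}). There is, however, a genuine gap in your key analytic input. The bound you propose,
$$|M_{\psi_R}[u]|\leq C\,\|u_0\|^{2-\frac1s}\,\|(-\Delta)^{\frac s2}u\|^{\frac1s},$$
is \emph{stronger} than what the tools you invoke deliver. The commutator/interpolation estimate of \cite{bhl} (their Lemma A.1), which is what ``Cauchy--Schwarz $+$ a commutator estimate in the spirit of \cite{bhl}'' produces, reads
$$|M_{\psi_R}[u]|\leq C(R)\Big(\|(-\Delta)^{\frac14}u\|^2+\|u\|\,\|(-\Delta)^{\frac14}u\|\Big),$$
and the cross term $\|u\|\,\|(-\Delta)^{\frac14}u\|$, which comes from the commutator $[(-\Delta)^{\frac14},\nabla\psi_R]$ (bounded only on $L^2$), is not present in your version. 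After the interpolation $\|(-\Delta)^{\frac14}u\|\leq\|u\|^{1-\frac1{2s}}\|(-\Delta)^{\frac s2}u\|^{\frac1{2s}}$ and mass conservation, the honest estimate is
$$|M_{\psi_R}[u(t)]|\leq C\big(1+(g'(t))^{\frac1{2s}}\big),$$
with an additive constant, not the pure power you wrote. Your claimed one-term bound would need a separate proof that neither your sketch nor the cited literature provides.

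This is not a cosmetic issue, because with the correct bound the inequality $\delta g\leq C\big(1+(g')^{\frac1{2s}}\big)$ yields the Riccati inequality $g'\geq c_0\,g^{2s}$ only in the regime where $g'$ (equivalently $g$) exceeds a fixed constant; your integration step therefore does not exclude the scenario in which $g$ stays bounded for all time, i.e. $\|(-\Delta)^{\frac s2}u\|\in L^2\big((t_0,\infty)\big)$ with $T^*=\infty$. Ruling out precisely that scenario is where the hypothesis $E(u_0)\neq0$ is really needed --- not, as in your first paragraph, merely to guarantee $g>0$. Indeed, if $g$ were bounded there would be a sequence $t_n\to\infty$ with $\|(-\Delta)^{\frac s2}u(t_n)\|\to0$; the Gagliardo--Nirenberg inequality \eqref{ineq} together with mass conservation then forces $\int_{\R^N}(I_\alpha*|\cdot|^b|u(t_n)|^p)|x|^b|u(t_n)|^p\,dx\to0$, hence $E(u_0)=\lim_nE(u(t_n))=0$, contradicting $E(u_0)\neq0$. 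Once this case is added, your Riccati argument applies verbatim in the complementary case $g\to\infty$ and the proof closes; as written, it rests on an unproven strengthening of the \cite{bhl} estimate and leaves the bounded-$g$ case unaddressed.
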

Let us discuss two cases.
\begin{enumerate}
\item[1.]{Case 1: ${E(u_0)}<0$}.\\
Thanks to the variance identity in Proposition \ref{viril}, for large $R>0$,
\begin{eqnarray*}
\frac{d}{dt}M_{\psi_R}[u(t)]
&\leq&2sBE(u_0)-2s(B-2)\|(-\Delta)^{\frac s2}u(t)\|^2\\
&+&C(\frac1{R^{2s}}+\frac1{R^{(N-1-\varepsilon-2b)(p-1-\frac\alpha N)}})\|(-\Delta)^{\frac s2}u(t)\|^{\frac{1+\varepsilon}{s}(p-1-\frac\alpha N)}\\
&\leq&sBE(u_0)-s(B-2)\|(-\Delta)^{\frac s2}u(t)\|^2,
\end{eqnarray*}
where we discuss $\|(-\Delta)^{\frac s2}u(t)\|\leq1$ or $\|(-\Delta)^{\frac s2}u(t)\|>1$. Integrating in time the previous inequality, $M_{\psi_R}[u(t)]<0$, for large time. Thus, integrating in time, one gets
$$M_{\psi_R}[u(t)]\leq-\delta\int_{t_0}^t\|(-\Delta)^{\frac s2}u(\tau)\|^2\,d\tau,\quad\forall t\geq t_0.$$
The previous Lemma closes the proof.
\item[2.]{Case 2: Assume that \eqref{ss}-\eqref{ss1} are satisfied.}\\
 Take $\eta>0$ such that
$${}{E(u_0)}^{s_c}{}{M(u_0)}^{s-s_c}<[(1-\eta)E(\phi)]^{s_c}M(\phi)^{s-s_c}$$
With a direct computation
$$(1-\eta)(B-2)\|(-\Delta)^{\frac s2}u(t)\|^2>B{}{E(u_0)}.$$
By Proposition \ref{viril}, for $O_R(1)\to0$ uniformly in time, 
\begin{eqnarray*}
\frac{d}{dt}M_{\psi_R}[u(t)]
&\leq&2sBE-2s(B-2)\|(-\Delta)^{\frac s2}u\|^2+\frac C{R^{2s}}\\
&+&\frac C{R^{(N-1-\varepsilon-2b)(p-1-\frac\alpha N)}}\|(-\Delta)^\frac s2 u\|^{\frac{1+\varepsilon}{s}(p-1-\frac\alpha N)}\\
&\leq&-2s\eta(B-2)\|(-\Delta)^{\frac s2}u\|^2\\
&+&\frac1{R^{(N-1-\varepsilon-2b)(p-1-\frac\alpha N)}}\|(-\Delta)^{\frac s2}u\|^{\frac{1+\varepsilon}{s}(p-1-\frac\alpha N)}{}{+O_R(1)}\\
&\leq&[-2s\eta(B-2)+O_R(1)]\|(-\Delta)^{\frac s2}u(t)\|^2+O_R(1)\\
&\leq&-s\eta(B-2)\|(-\Delta)^{\frac s2}u(t)\|^2.
\end{eqnarray*}
The proof follows by Lemma \ref{ode} via te fact that $p<1+\frac\alpha N+2s$.
\end{enumerate}


\end{document}